\newtheorem{theorem}{Theorem}%[section]
\newtheorem{lemma}{Lemma}
\newtheorem{proposition}[theorem]{Proposition}
\newtheorem{question}[theorem]{Question}
\newtheorem*{definition}{Definition}
\newcommand{\be}{\begin{equation}}
\newcommand{\ee}{\end{equation}}
\newcommand{\bea}{\begin{eqnarray}}
\newcommand{\eea}{\end{eqnarray}}
\newcommand{\vs}{\vspace{0.5cm}}
\newcommand{\vsv}{\vspace{0.12cm}}
\def\XXint#1#2#3{{\setbox0=\hbox{$#1{#2#3}{\int}$ }
\vcenter{\hbox{$#2#3$ }}\kern-.6\wd0}}
\begin{document}

\title{The set of all orthogonal complex\\ structures on the flat $6$-tori}
\author{Gabriel Khan}

\address{Gabriel Khan. Department of Mathematics,
The Ohio State University, 231 West 18th Avenue, Columbus, OH 43210,
USA}
\email{{khan.375@osu.edu}}

\author{Bo Yang}
%\thanks{Research partially supported by an AMS-Simons Travel Grant}

\address{Bo Yang. Department of Mathematics, Rutgers University,
110 Frelinghuysen Road, Piscataway, NJ 08854, USA.}
\email{{boyang@math.rutgers.edu}}

%\author{Fangyang Zheng} \thanks{Research partially supported by NSFC 11271320}
%\address{Fangyang Zheng. Department of Mathematics, Zhejiang Normal University,
%Jinhua, 321004, Zhejiang, China and Department of Mathematics,

\author{Fangyang Zheng}
%\thanks{The third-named author is partially supported by a Simons
%Collaboration Grant}

\address{Fangyang Zheng. Department of Mathematics, The Ohio State
University, 231 West 18th Avenue, Columbus, OH 43210, USA}
\email{{zheng.31@osu.edu}}

\begin{abstract}
In \cite{BSV}, Borisov, Salamon and Viaclovsky constructed non-standard
orthogonal complex structures on flat tori $T^{2n}_{\mathbb R}$ for any
$n\geq 3$. We will call these examples BSV-tori. In this note, we show that
on a flat $6$-torus, all the orthogonal complex structures are either the
complex tori or the BSV-tori. This solves the classification problem for
compact Hermitian manifolds with flat Riemannian connection in the case
of complex dimension three.

\end{abstract}

\maketitle

\tableofcontents

\markleft{The set of all orthogonal complex structures on the flat
$6$-tori} \markright{The set of all orthogonal complex structures on
the flat $6$-tori}

\section{Introduction}

Given a Hermitian manifold $(M^n,g)$, there are several canonical metric
connections on it that are well-studied. The Riemannian (or Levi-Civita)
connection $\nabla$ which is torsion free, and the Chern (aka Hermitian)
connection $\nabla^c$ which is compatible with the complex structure, and
the Bismut connection $\nabla^b$, which is compatible with the almost
complex structure and has skew-symmetric $(3,0)$ torsion.
When $g$ is K\"ahler, all three connections coincide, but when $g$ is
not K\"ahler, the three are  mutually distinct. Let us denote by $R$,
$R^c$, and $R^b$ the corresponding curvature tensors, respectively.

\vsv

From the differential geometric point of view, it is very natural to
study the curvature of each of these connections, and ask what kind of
manifolds are ``space forms" with respect to a given connection. In
particular, one could ask what kind of compact complex manifolds will
 admit a Hermitian metric with flat Riemannian or Chern or Bismut
 connection?

\vsv

For the Chern connection $\nabla^c$, Boothby \cite{Boothby} proved in 1958
that compact Hermitian manifolds with $R^c=0$ identically are exactly the
compact quotients of complex Lie groups equipped with left invariant
metrics. Such manifolds can be non-K\"ahler when $n\geq 3$. H.-C. Wang's
complex parallisable manifolds \cite{Wang} form an important subset in
this class.

\vsv

For the Bismut connection $\nabla^b$, in a recent work \cite{WYZ}, we were
able to show that compact Hermitian manifolds $(M^n,g)$ with flat Bismut
connections are exactly those covered by Samelson spaces, namely,
$G\times {\mathbb R}^k$ equipped with a bi-invariant metric and a
left invariant complex structure. Here $G$ is a simply-connected compact
semisimple Lie group, and $0\leq k \leq 2n$. In particular, compact
non-K\"ahler Bismut flat surfaces are exactly those isosceles Hopf
surfaces, and in dimension three their universal cover is either a central
Calabi-Eckmann threefold $S^3\times S^3$, or
$({\mathbb C}^2\setminus \{ 0\} )\times {\mathbb C}$. We refer the readers
to \cite{WYZ} for more details.

\vsv

So now we are left with the question of answering what kind of
compact Hermitian manifolds $(M^n,g)$ will have identically zero
Riemannian curvature tensor?  By Bieberbach Theorem, we know that
such manifolds admit finite unbranched cover that is a flat torus
$T^{2n}_{\mathbb R}$. So the question boils down to what kind of
orthogonal complex structures are there on a flat $T^{2n}_{\mathbb
R}$?

\vsv

Given a flat $2n$-torus $M=T^{2n}_{\mathbb R}$, first of all, there
are always compatible complex structures $J$ on $M$ that makes $M$ a
complex $n$-torus. All such complex structures (compatible with the
orientation) are parameterized by the Hermitian symmetric space
$Z_n=SO(2n)/U(n)$. Clearly, for a complex structure $J$ on $M$
compatible with the flat metric $g$, if $J$ makes $g$ a K\"ahler
metric, then $(M,J)$ is a complex torus. In this case we will call
this $J$ a standard complex structure. When $J$ makes the metric $g$
non-K\"ahler, we will call such a complex structure non-standard.

\vsv

When $n=2$, the classification theory for compact complex surfaces
implies that any complex structure on $T^4_{\mathbb R}$ must be a
complex $2$-torus, thus there are no non-standard complex structures.
For $n\geq 3$, however, there are non-standard complex structures on
some flat $2n$-torus for each $n\geq 3$. In \cite{BSV}, Borisov, Salamon,
and Viaclovsky constructed non-standard orthogonal complex structures
on some flat $T^{2n}_{\mathbb R}$ for any $n\geq 3$. We will call these
examples {\em warped tori of Borisov-Salamon-Viaclovsky,} or {\em BSV-tori}
 for short. In Section \ref{BSV systematic}, we will give some explicit
 discussion of BSV-tori in dimension $3$ and their generalizations. In
 particular, BSV-tori in dimension $3$ are defined as follows:

\begin{definition} [\textbf{BSV $3$-tori}]
For $i=1$ and $2$, let $(M_i, g_i)$ be the flat torus of real
dimension $2$ and $4$, respectively, and let $(M,g)$ be their
product. Let $J_1$ be the complex structure determined by $g_1$,
which makes $M_1$ an elliptic curve. Let $f$  be a non-constant
holomorphic map $f: M_1 \rightarrow {\mathbb P}^1$. Since ${\mathbb
P}^1=SO(4)/U(2)$ is the set of all complex structures on the flat
$4$-torus $(M_2, g_2)$ compatible with the metric and the
orientation, one may consider almost complex structures $J $ on $M$
defined by
$$J= J_1+J_{f(y_1)}$$
at the point $(y_1, y_2)$ in $M=M_1\times M_2$. It is shown in \cite{BSV}
that $J$ is integrable since $f$ is holomorphic, so $(M,g,J)$ becomes
a Hermitian manifold with everywhere zero Riemannian curvature. The
metric $g$ is not K\"ahler with respect to these complex structures
(since $f$ is non-constant), so they are all non-standard.
\end{definition}

Note that any BSV-$3$-torus is always a product of a flat $2$-torus with
a flat $4$-torus as a Riemannian manifold, while a generic flat $6$-torus
 does not split. Also, as a complex manifold, a BSV $3$-torus $M^3$ is
 a holomorphic submersion over an elliptic curve, whose fibers are
 complex $2$-tori, but the fibers are not all biholomorphic to each other.

\vsv

The main purpose of this article is to show that, in complex dimension
three,  BSV-tori actually give all the possible orthogonal complex
structures on the flat torus $T^{6}_{\mathbb R}$, besides the standard
complex tori. In other words, we have the following:

\begin{theorem} \label{Riemannianflat}
Let $(M^3,g)$ be a compact Hermitian manifold whose Riemannian curvature
tensor is identically zero. Then a finite unbranched cover of $M$ is
holomorphically isometric to either a flat complex torus or a BSV-torus.
\end{theorem}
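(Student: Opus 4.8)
The plan is to pass to a flat torus quotient, lift to the universal cover, and study the Gauss map of $J$. By Bieberbach's theorem the vanishing of the Riemannian curvature forces a finite unbranched cover of $M$ to be a flat torus $X=\mathbb{R}^{6}/\Lambda$ with a parallel metric $g$; after pulling back $J$ and, if necessary, passing to one more finite cover, $J$ is an orthogonal complex structure compatible with a fixed orientation. Lifting to $\mathbb{R}^{6}$, the datum of $J$ becomes a $\Lambda$-periodic smooth map $\phi\colon\mathbb{R}^{6}\to Z$, where $\phi(x)$ is the $J$-$(0,1)$ subspace of $\mathbb{C}^{6}$ at $x$ and $Z\cong\mathbb{CP}^{3}$ is the relevant component of the Grassmannian of maximal isotropic subspaces of $\mathbb{C}^{6}$. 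In background complex coordinates $z_{1},z_{2},z_{3}$ we write the $J$-$(0,1)$ distribution locally as the graph $\operatorname{span}\{\partial_{\bar z_{k}}+\sum_{j}A_{jk}\partial_{z_{j}}\}_{k}$ of a skew-symmetric matrix $A$, whose entries are local holomorphic coordinates on $Z$; then $N_{J}=0$ is equivalent to the first-order system
\[
Z_{\bar k}A_{jl}=Z_{\bar l}A_{jk}\quad(\forall\,j,k,l),\qquad Z_{\bar k}:=\partial_{\bar z_{k}}+\textstyle\sum_{m}A_{mk}\partial_{z_{m}}.
\]
The first step is to observe that this forces $\phi$ to be holomorphic: the tensor $T_{jkl}:=Z_{\bar k}A_{jl}$ is symmetric in $(k,l)$ by the system and antisymmetric in $(j,l)$ because $A$ is skew, and a tensor carrying an overlapping symmetric and antisymmetric pair of indices vanishes; hence $\bar\partial_{J}A_{jl}=0$ for all $j,l$, i.e.\ $\phi\colon(X,J)\to Z\cong\mathbb{CP}^{3}$ is a holomorphic map.

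Since $X$ is compact, the image $Y:=\phi(X)\subset\mathbb{CP}^{3}$ is then an irreducible projective subvariety (Remmert), $T^{0,1}_{J}X$ is a holomorphic sub-bundle of the trivial bundle $T_{\mathbb{R}}X\otimes\mathbb{C}$ over $(X,J)$, isomorphic to $\phi^{*}S$ for the tautological bundle $S$ on $Z$; a Chern class computation then gives $c_{1}(X)=\pm\,2\,\phi^{*}h$ (with $h$ the hyperplane class), and the six constant complex $1$-forms on $X$, projected to type $(0,1)$, become holomorphic sections generating $\overline{\Omega^{1}_{X}}\cong\phi^{*}S^{*}$. If $\dim Y=0$ then $J$ is constant, $(X,J)$ is a flat complex torus, and we are in the standard case. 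The problem thus reduces to the analysis of $\dim Y\ge1$, where the goal is to show that $Y$ must be a line $\mathbb{P}^{1}\subset\mathbb{CP}^{3}$ of ``product type'' — the orthogonal complex structures agreeing with a fixed $J_{1}$ on some constant orthogonal $2$-plane $U\subset\mathbb{R}^{6}$ — and that $J$ is accordingly a BSV-torus.

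This analysis is the core of the argument and the step I expect to be the main obstacle. I would organise it around the space $P\subset\mathbb{C}^{6}$ of \emph{parallel holomorphic $1$-forms}, i.e.\ constant complex $1$-forms of type $(1,0)$ at every point. A linear-algebra argument shows $\dim_{\mathbb{C}}P\in\{0,1,3\}$: if two independent such forms existed then $J$ would preserve the constant orthogonal $2$-plane $g$-complementary to the real $4$-plane they span, and an orthogonal complex structure on a $2$-plane is locally constant, producing a third; and $\dim P=3$ means precisely that $J$ is constant. One must then (i) exclude $\dim P=0$ and (ii) show that $\dim Y\ge1$ forces $\dim P=1$ with $Y$ of the product type above. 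For this I would combine the self-referential nature of $\phi$ — it recovers $J$, so that $d\phi$ is literally the second fundamental form of the holomorphic isotropic sub-bundle $E=T^{0,1}_{J}X$ in the flat connection on $T_{\mathbb{R}}X\otimes\mathbb{C}$ — with the identity $c_{1}(X)=\pm 2\phi^{*}h$ and the topological constraints on Chern numbers of a complex structure on a torus, a degree computation for $\phi$ onto $Y$, and the fact that in complex dimension $3$ the twistor space is merely $\mathbb{CP}^{3}$, so that the geometry of $Y\subset\mathbb{CP}^{3}$ is very restricted; a $2$- or $3$-dimensional $Y$ should be ruled out by a contradiction among these.

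Finally, when $\dim P=1$, let $\eta$ span $P$. Then $J$ preserves a constant orthogonal splitting $\mathbb{R}^{6}=U\oplus U^{\perp}$ with $\dim U=2$, acts on $U$ as a fixed $J_{1}$, and acts on the $4$-plane $U^{\perp}$ through a nonconstant map $K\colon X\to\mathbb{P}^{1}=SO(4)/U(2)$. Inserting this block form into $N_{J}=0$ (equivalently, using holomorphicity of $\phi$) leaves a single off-block function $c$ with $\partial_{\bar z_{1}}c=0$, $\partial_{\bar z_{2}}c=c\,\partial_{z_{3}}c$, $\partial_{\bar z_{3}}c=-c\,\partial_{z_{2}}c$, so $c$ is holomorphic along $U$ and harmonic along $U^{\perp}$; compactness of $X$, with attention to the divisor $\{c=\infty\}$, then forces $c$ to be independent of $U^{\perp}$, so that $K=f\circ p$ for a holomorphic nonconstant $f$ on a $1$-dimensional base. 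Periodicity of the nonconstant $K$ makes the projection of $\Lambda$ to $U$ a lattice, so that after one more finite cover $\Lambda$ splits compatibly with $U\oplus U^{\perp}$, realising $X$ isometrically as a product $M_{1}\times M_{2}$ of an elliptic curve and a flat $4$-torus with $J=J_{1}+J_{f(y_{1})}$: a BSV $3$-torus, and the identification is holomorphic and isometric. The lattice-splitting bookkeeping is routine; the real difficulties are the dimension and degree analysis of the Gauss map in the previous paragraph and, secondarily, the compactness argument forcing $K$ to factor through the base.
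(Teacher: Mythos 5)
Your first step is fine: viewing $J$ as a map $\phi$ to the twistor space $Z\cong\mathbb{CP}^3$ and deducing holomorphicity of $\phi$ from integrability via the symmetric/antisymmetric index cycle is correct and standard, and it is a genuinely different starting point from the paper, which never introduces the Gauss map but works instead with the Chern torsion. However, the proposal has a genuine gap exactly where you yourself flag "the main obstacle": nothing in the text actually excludes $\dim P=0$, nor rules out a twistor image $Y$ of dimension $2$ or $3$ (or even a curve of higher degree not of product type), nor establishes that $\dim Y\geq 1$ forces $\dim P=1$. The sentence "a $2$- or $3$-dimensional $Y$ should be ruled out by a contradiction among these" is a wish list of ingredients ($c_1(X)=\pm2\phi^{*}h$, Chern-number constraints, a degree count), not an argument; and the later step where compactness is said to force the off-block function $c$ to be independent of $U^{\perp}$ ("with attention to the divisor $\{c=\infty\}$") is likewise only asserted. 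Since the whole content of the theorem is precisely that a non-standard orthogonal $J$ on a flat $6$-torus must split off a parallel complex line and be warped over it, the proposal as written does not prove the theorem.

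For comparison, the paper produces the parallel splitting by a completely different mechanism, and each link in its chain is doing real work that your outline has no substitute for: compactness plus $R=0$ gives balancedness via the equality case of Gauduchon's inequality; combined with the identities $T^k_{ij;l}=\sum_r T^r_{ij}T^k_{rl}$ forced by $R=0$, this yields $\mathrm{tr}(A_XA_Y)=0$, which in complex dimension three is upgraded by an explicit linear-algebra computation to $A_XA_Y=0$; this gives the pointwise kernel decomposition of the tangent space, a Riccati-equation argument along the totally geodesic leaves of $K_2$ shows the local splitting is parallel, real-analyticity globalizes it on the universal cover, and discreteness of the projected lattice (orbits lie in level sets of the nonconstant holomorphic $f$) closes up the leaves so that a finite cover is the product of an elliptic curve with a flat $4$-torus carrying the warped $J$. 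If you want to pursue the twistor route, you must supply an actual replacement for this chain — in particular a compactness input playing the role of balancedness, and a concrete argument bounding the image $Y$ and forcing it to lie on a product-type line — otherwise the core of the classification remains unproved.
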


As the proof shall indicate, in higher dimensions, Riemannian flat
compact Hermitian manifolds are still rather special and should form
 a highly restrictive class which contains all BSV tori. But perhaps
  a generalization of BSV tori should be formulated and organized
  before a classification statement can be made and proved.
  For $n\geq 4$, the algebraic behavior of the Chern torsion tensor
   is much more complicated than the $n=3$ case, and we intend to
   pursue these higher dimensional cases as the next project.

\vsv

One property worth noticing is that, these BSV $3$-tori are actually
non-K\"ahlerian, namely, they do not admit any K\"ahler metric:

\begin{proposition} \label{torsionparallel}
Let $M^3=M_1\times M_2$ be a BSV $3$-tori, where $M_1$ is a flat
$2$-torus and $M_2$ a flat $4$-torus. Then $M$ admits no
pluri-closed Hermitian metrics, in particular, it is non-K\"ahlerian. Its Kodaira dimension is $-\infty $, and
its total torsion, namely, the $L^2$-norm of the Chern torsion of $M$ with respect to the
standard flat metric $g$,  is equal to $32\pi v_2d$, where $v_2$ is the volume
of $M_2$ and $d$ the degree of the map $f: M_1 \rightarrow {\mathbb
P}^1$.
\end{proposition}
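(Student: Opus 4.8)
The plan is to run all three claims off one computational setup. Fix a linear holomorphic coordinate $z$ on the elliptic curve $E:=M_1$ and linear coordinates $w_1,w_2$ on $M_2$ adapted to the reference structure $J_{f\equiv 0}$; then $\varphi_0=dz$, $\varphi_1=dw_1+f(z)\,d\bar w_2$, $\varphi_2=dw_2-f(z)\,d\bar w_1$ is a $(1,0)$-coframe for the BSV structure $J=J_1+J_{f(z)}$, and $e^0=c_0\,dz$, $e^1=(1+|f|^2)^{-1/2}\varphi_1$, $e^2=(1+|f|^2)^{-1/2}\varphi_2$ is a unitary coframe for $g$, with $c_0$ a constant fixed by the lattice of $M_1$. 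One records $d\varphi_0=0$, $d\varphi_1=f'\,dz\wedge d\bar w_2$, $d\varphi_2=-f'\,dz\wedge d\bar w_1$, hence $\partial\varphi_j=\mu\,\varphi_0\wedge\varphi_j$ ($j=1,2$), $\bar\partial\varphi_1=\nu\,\varphi_0\wedge\bar\varphi_2$, $\bar\partial\varphi_2=-\nu\,\varphi_0\wedge\bar\varphi_1$, where $\mu=f'\bar f/(1+|f|^2)$ and $\nu=f'/(1+|f|^2)$. A short computation then gives $\partial\omega_g=\frac{i\,\bar f'}{(1+|f|^2)^2}\,\varphi_1\wedge\varphi_2\wedge\bar\varphi_0$, so the only non-vanishing component of the Chern torsion of $(M,g,J)$ in the unitary coframe is $T^{0}_{12}=-T^{0}_{21}$, a fixed multiple of $c_0^{-1}\bar f'/(1+|f|^2)$.

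For the Kodaira dimension: since $d\varphi_1$ and $d\varphi_2$ are each divisible by $\varphi_0=dz$, one has $d(\varphi_0\wedge\varphi_1\wedge\varphi_2)=0$, so $\Omega:=\varphi_0\wedge\varphi_1\wedge\varphi_2$ is a holomorphic section of $K_M$, nowhere zero over the finite locus of $f$; near a pole of $f$ of order $k$ one trivializes $K_M$ instead by $f^{-2}\Omega=\varphi_0\wedge(f^{-1}\varphi_1)\wedge(f^{-1}\varphi_2)$, which extends holomorphically without zeros. Hence $\Omega$ has polar divisor $\sum_i 2k_i\,\pi^{-1}(p_i)$ over the poles $p_i$ of $f$, and since $\sum_i k_i=\deg f=d$ we get $K_M\cong\pi^*N$ for $N=\mathcal O_E(-\sum_i 2k_i p_i)$, a line bundle on $E$ of degree $-2d<0$. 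By the projection formula $H^0(M,K_M^{\otimes m})=H^0(E,N^{\otimes m})=0$ for all $m\ge 1$, so $\kappa(M)=-\infty$; as a byproduct $-K_M=\pi^*N^{-1}$ carries a $2d$-dimensional space of holomorphic sections, all pulled back from $E$, and $c_1(M)\neq 0$ in $H^2(M;\mathbb R)$. For the total torsion: contracting $\partial\omega_g$ with $g$ shows that $|T^c|^2\,dV_g$ is a universal constant times $\frac{|f'(z)|^2}{(1+|f(z)|^2)^2}\,\frac i2\,dz\wedge d\bar z\wedge dV_{M_2}$ --- the factors involving the metric on $M_1$ cancel, which is why $v_1$ does not appear --- so $\int_M|T^c|^2\,dV_g$ equals that constant times $v_2\cdot\int_{M_1}f^*\omega_{FS}$, where $\omega_{FS}$ is the round metric on $\mathbb P^1=SO(4)/U(2)$; the last integral is topological, equal to $d$ times the area of $\mathbb P^1$, and collecting the normalizations of $SO(4)/U(2)$, of $|T^c|^2$, and of $dV_g$ yields $32\pi v_2 d$.

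For non-existence of pluri-closed metrics (which contains non-Kählerianity, a Kähler form being pluri-closed): the crucial point is that the $M_2$-translations $(z,w)\mapsto (z,w+t)$ act on $(M,J)$ by biholomorphisms, since $J=J_1+J_{f(z)}$ does not involve the fibre coordinate $w$. Thus if $\omega$ is any pluri-closed metric, its average over $t\in M_2$ is again pluri-closed and $M_2$-invariant, i.e. $\omega=\frac i2\sum_{0\le j,k\le 2}c_{j\bar k}(z)\,\varphi_j\wedge\bar\varphi_k$ with $(c_{j\bar k})$ a positive Hermitian matrix of functions on $E$. Using the structure equations for the $\varphi_j$ above and their conjugates one computes $\partial\bar\partial\omega$ as an explicit $(2,2)$-form whose nine coefficients are second-order differential expressions in the $c_{j\bar k}$ over the curve $E$; the plan is to isolate a scalar combination of these coefficients whose integral over $E$ kills the Laplacian-type terms and leaves $\int_E(\text{a positive expression in the }c_{j\bar k})\,|\nu|^2\,dA$ up to terms one controls, which is strictly positive because $(c_{j\bar k})>0$ and $|\nu|^2=|f'|^2/(1+|f|^2)^2\not\equiv 0$ exactly since $f$ is non-constant --- contradicting $\partial\bar\partial\omega=0$. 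I expect this last step to be the main obstacle: the $M_2$-averaging reduces the problem to nine functions on $E$, but one still has to find the scalar functional of $\partial\bar\partial\omega$ that is manifestly positive after integration, and the interference of the off-diagonal components $c_{0\bar1},c_{0\bar2},c_{1\bar2}$ and of the derivatives of $\mu,\nu$ is where care is needed; the first two claims, by contrast, are bookkeeping with the coframe together with the identification of $K_M$ as a negative-degree pullback from $E$ and of $\int_{M_1}f^*\omega_{FS}$ with the topological degree of $f$.
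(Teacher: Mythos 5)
Your treatment of the headline claim --- non-existence of pluri-closed metrics --- has a genuine gap, and you flag it yourself: after averaging over the $M_2$-translations you are left ``searching'' for a positive scalar functional of $\partial\overline{\partial}\omega$, and no such functional is produced. The missing idea is to test against the flat metric itself by integration by parts, which makes the averaging unnecessary. One computes for the flat Hermitian form $\omega_g$ that $\partial\overline{\partial}\omega_g = 2\,\beta\overline{\beta}\,\omega_g$, where $\beta=\overline{\lambda}\,\varphi_3=-\frac{i}{1+|f|^2}df$ (in your notation, $\beta\overline{\beta}$ is the pullback of the Fubini--Study form by $f$), so $\partial\overline{\partial}\omega_g$ is a nonnegative $(2,2)$-form, not identically zero since $f$ is non-constant. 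Then for \emph{any} Hermitian metric $\omega_h=\sqrt{-1}\sum h_{i\overline{j}}\varphi_i\overline{\varphi_j}$ one has pointwise $\sqrt{-1}\,\partial\overline{\partial}\omega_g\wedge\omega_h=\frac13|\lambda|^2\bigl(h_{1\overline{1}}+h_{2\overline{2}}\bigr)\omega_g^3$, whose integral over $M$ is strictly positive, while Stokes gives $\int_M\partial\overline{\partial}\omega_g\wedge\omega_h=\int_M\omega_g\wedge\partial\overline{\partial}\omega_h=0$ whenever $\omega_h$ is pluri-closed --- a contradiction. Without this (or an equivalent positivity pairing) your nine-coefficient computation on $E$ is open-ended, and the interference of the off-diagonal $c_{0\overline{1}},c_{0\overline{2}},c_{1\overline{2}}$ terms that worries you is exactly what this pairing circumvents; as written, the non-K\"ahlerian statement is not proved.

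The other two claims are in better shape. Your Kodaira dimension argument is correct and takes a genuinely different route from the paper's: identifying $K_M\cong\pi^*N$ with $\deg N=-2d<0$ via the meromorphic $(3,0)$-form $\varphi_0\wedge\varphi_1\wedge\varphi_2$ (which uses $g(M_1)=1$ to have the global $\varphi_0=dz$), then $\pi_*\mathcal{O}_M=\mathcal{O}_E$ and the projection formula kill all plurigenera at once; the paper instead uses balancedness of $g$ and an integral of the Chern form against $\omega^2$ along a would-be pluricanonical divisor. Your route is more algebraic, recovers $c_1(M)=2d\,\pi^*\sigma$ as a byproduct, and avoids the balanced-metric input. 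The total torsion computation follows the paper's structure (the $M_1$-metric factors cancel and $\int_{M_1}f^*\omega_{FS}$ is $2\pi d$), but the assertion that the normalizations ``collect'' to $32\pi v_2 d$ is exactly the content of the claim: you need $|T^c|^2=8\sum_{i,j,k}|T^k_{ij}|^2=16|\lambda|^2$ together with $|\lambda|^2\varphi_3\overline{\varphi_3}=\frac{df\,d\overline{f}}{(1+|f|^2)^2}$ and $\int_{\mathbb{P}^1}\frac{i\,dz\,d\overline{z}}{(1+|z|^2)^2}=2\pi$ to get $16\cdot 2\pi\cdot v_2\,d=32\pi v_2 d$, so that bookkeeping should be carried out rather than asserted.
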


In Section \ref{BSV systematic} we will prove a slightly more general version
of Proposition \ref{torsionparallel}, where $M_1$ is replaced by any
compact Riemann surface with positive genus. All statements are valid except the one on
Kodaira dimension. We should point it out it is already proved in
\cite{BSV} (Proposition 5.3 on P.144 \cite{BSV}) that the flat metric $(M^3, J, g)$
is not K\"ahler  if the holomorphic map $f$ in the definition is
non-constant. Here we emphasize that $(M^3, J)$ is non-K\"ahlerian in
the sense that it does not admit any K\"ahler metric.

Since the degree of the map $f$ can be any positive integer
greater than $1$, we know that on $T^6_{\mathbb R}$, there are
infinitely many complex structures with mutually distinct first
Chern class, and there is no uniform bound on the total torsion,
even though all complex structures are balanced in this case
(\cite{Gauduchon2}, \cite{BSV}).

\vsv

In 1958 Calabi \cite{Calabi} discovered that $M_1 \times
T_{\mathbb{R}}^4$ where $M_1$ is a hyperelliptic Riemann surface
with odd genus $g \geq 3$ and $T_{\mathbb{R}}^4$ a real $4$-torus,
can be given a complex structure $J$ such that the resulting
threefold $(M^3, J)$ admits no K\"{a}hler metric and has vanishing
fist Chern class. The complex structure Calabi used is related to
vector cross product in the space of purely Cayley numbers. In
Section \ref{Calabi 3-folds revisited}, we show that Calabi's
construction is a special case of the BSV type warped complex
structures on $M_1 \times T_{\mathbb{R}}^4$. The induced Hermitian
metrics from Calabi's construction is also a special case of
balanced metrics which are product Riemannian metrics.

\vsv

It seems natural to ask whether Theorem \ref{Riemannianflat} is also
true when $M_1$ is a Riemann surface with genus $g \geq 2$ with its
standard hyperbolic metric. In the end of paper we formulate the
problem and leave it to the future studies.

\vs

\section{The kernel spaces of the torsion}

\vsv

Let us start with a  Hermitian manifold $(M^n,g)$. Following the notations of \cite{YZ}, we will denote by $\nabla$, $\nabla^c$ the Riemannian (aka Levi-Civita) or the Chern (aka Hermitian) connection, respectively. Denote by $R$, $R^c$ the curvature tensors of these two connections, and by $T^c$ the torsion tensor of $\nabla^c$. Under a local unitary frame $e$ of type $(1,0)$ tangent vectors, $T^c$ has components
$$ T^c(e_i, e_j) = \sum_{k=1}^n 2\ T_{ij}^k e_k, \ \ \ \ \  \ T^c(e_i, \overline{e_j}) =0. \ \ \ \ \ $$

By Lemma 7 of \cite{YZ}, we have the following
\begin{eqnarray}
2T^k_{ij,\ \overline{l}} & = & R^c_{j\overline{l}i\overline{k}} -
R^c_{i\overline{l}j\overline{k}} \ ,
\label{formula 21}\\
R_{ijk\overline{l}} \ & = & T^l_{ij,k} + T^l_{ri} T^r_{jk} -
T^l_{rj} T^r_{ik} \ ,  \label{formula 22} \\
R_{ij\overline{k}\overline{l}} & = &
T^l_{ij,\overline{k}} - T^k_{ij,\overline{l}} +
2T^r_{ij} \overline{T^r_{kl}} + T^k_{ri} \overline{ T^j_{rl} } +
T^l_{rj} \overline{ T^i_{rk} } - T^l_{ri} \overline{ T^j_{rk} } -
T^k_{rj} \overline{ T^i_{rl} } \ ,   \label{formula 23}\\
R_{k\overline{l}i\overline{j}} & = &
R^c_{k\overline{l}i\overline{j}} - T^j_{ik,\overline{l}} -
\overline{ T^i_{jl,\overline{k}} } + T^r_{ik} \overline{ T^r_{jl} }
- T^j_{rk} \overline{ T^i_{rl} } - T^l_{ri} \overline{ T^k_{rj} } \ ,
\label{formula 24}
\end{eqnarray}
for any indices $i$, $j$, $k$, $l$. Here and below, $r$ is summed from $1$ to $n$,  and the index after the comma stands for covariant derivative with respect to $\nabla^c$.

\vsv

Now let us denote by $T^k_{ij;l}$, $T^k_{ij; \overline{l}}$ the covariant derivatives with respect to $\nabla$. Following the notations of \cite{YZ}, we have
\begin{eqnarray}
 \nabla_{e_l}e_i & = & \nabla^c_{e_l}e_i + \gamma_{ir}(e_l) \ e_r \ = \    \nabla^c_{e_l}e_i +  T^r_{il} \ e_r \ ,\\
 \nabla_{\overline{e_l}} e_i & = & \nabla^c_{\overline{e_l}} e_i + \gamma_{ir}(\overline{e_l}) \ e_r + \overline{(\theta_2)_{ir}(e_l)} \ \overline{e_r} \ = \  \nabla^c_{\overline{e_l}} e_i - \overline{T^i_{rl}} \ e_r + T^l_{ir} \ \overline{e_r} \ .
\end{eqnarray}

By a straight forward computation, we obtain the following identities:
\begin{eqnarray}
T^k_{ij;l } & = & T^k_{ij,l} - T^k_{rj}T^r_{il} - T^k_{ir} T^r_{jl} + T^r_{ij} T^k_{rl} \ , \\
T^k_{ij;\overline{l}} & = & T^k_{ij,\overline{l}} - T^r_{ij} \overline{T^r_{kl}} + T^k_{rj} \overline{T^i_{rl} } - T^k_{ri} \overline{T^j_{rl} } \ .
\end{eqnarray}

From the last equality, we get
\begin{equation}
T^k_{ij;\overline{l}} - T^l_{ij;\overline{k}} = T^k_{ij,\overline{l}} - T^l_{ij,\overline{k}} - 2 T^r_{ij} \overline{T^r_{kl}} +  T^k_{rj} \overline{T^i_{rl} } - T^k_{ri} \overline{T^j_{rl} } -  T^l_{rj} \overline{T^i_{rk} } + T^l_{ri} \overline{T^j_{rk} } \ .
\end{equation}

Combining (2) and (7), or comparing (3) with (9), we get
\begin{eqnarray}
 & & T^k_{ij;l } \ =  \ T^r_{ij} T^k_{rl} + R_{ijl\overline{k}} \ , \\
 & & T^k_{ij;\overline{l}} - T^l_{ij;\overline{k}} \ = \ - R_{ij\overline{k}\overline{l}} \ .
\end{eqnarray}

So for Hermitian manifold $(M^n,g)$ with $R=0$ everywhere,  we have the following
\begin{lemma}
On a Hermitian manifold $(M^n,g)$ with identically zero Riemannian curvature, let $T^k_{ij}$ be the components of (half of) the torsion of the Chern connection, under a local unitary frame $e$. Their covariant derivatives with respect to the Riemannian connection $\nabla$ satisfy
\begin{eqnarray}
 & & T^k_{ij;l } \ = \ \sum_r T^r_{ij} T^k_{rl}  \ ,\\
 & & T^k_{ij;\overline{l}} \ = \ T^l_{ij;\overline{k}}
 \end{eqnarray}
 for any $i$, $j$, $k$, $l$ between $1$ and $n$.
  \end{lemma}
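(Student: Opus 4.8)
The plan is short, because the two asserted identities are precisely what one obtains by imposing $R\equiv 0$ in the general formulas (10) and (11) derived just above: on a Hermitian manifold with identically zero Riemannian curvature, the terms $R_{ijl\overline{k}}$ and $R_{ij\overline{k}\overline{l}}$ on the right-hand sides of (10) and (11) vanish, leaving $T^k_{ij;l}=\sum_r T^r_{ij}T^k_{rl}$ and $T^k_{ij;\overline{l}}=T^l_{ij;\overline{k}}$. So the only work is to be sure that (10) and (11) themselves are in hand.

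To produce (10) and (11) from scratch I would proceed as follows. First, differentiate the Chern torsion tensor using the relation between the two connections, $\nabla_{e_l}e_i=\nabla^c_{e_l}e_i+T^r_{il}e_r$ and $\nabla_{\overline{e_l}}e_i=\nabla^c_{\overline{e_l}}e_i-\overline{T^i_{rl}}e_r+T^l_{ir}\overline{e_r}$, to express the $\nabla$-derivatives $T^k_{ij;l}$ and $T^k_{ij;\overline{l}}$ in terms of the $\nabla^c$-derivatives plus quadratic torsion corrections; this is exactly (7) and (8). Next, in (7) use the antisymmetry $T^k_{ir}=-T^k_{ri}$ of the Chern torsion in its two lower holomorphic slots to rewrite the quadratic part so that, together with $T^k_{ij,l}$, it reproduces the right-hand side of (2) after interchanging the roles of $k$ and $l$; this gives $T^k_{ij;l}=R_{ijl\overline{k}}+T^r_{ij}T^k_{rl}$, i.e. (10). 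Finally, antisymmetrize (8) in $k\leftrightarrow l$ to get (9), and compare it term by term with the curvature identity (3) to recognize its right-hand side as $-R_{ij\overline{k}\overline{l}}$, which is (11).

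The one point that needs genuine care is index bookkeeping: one has to keep straight the order of the two lower holomorphic indices in every quadratic torsion term and apply $T^k_{ij}=-T^k_{ji}$ consistently, so that the terms in (7) and (9) match those in (2) and (3) exactly rather than up to a stray term. Beyond that there is no obstacle --- no analytic estimate, no global hypothesis, and no restriction on $n$ enters; the lemma is a purely pointwise algebraic consequence of the first Bianchi-type identities of \cite{YZ} once $R=0$ is assumed everywhere.
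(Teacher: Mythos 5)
Your proposal is correct and follows essentially the same route as the paper: it derives (7)--(9) from the connection relations (5)--(6), compares them with the curvature identities (2)--(3) of \cite{YZ} (interchanging $k$ and $l$ and using the antisymmetry of the torsion) to obtain (10)--(11), and then sets $R=0$. This is exactly the argument given in the text preceding the lemma, so nothing further is needed.
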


Now if $M$ is also compact, then since $R=0$,  by the equality case of the main theorem of \cite{Gauduchon2}, or by Theorem 3 of \cite{YZ}, we know that $M$ is balanced. That is, $\sum_l T^l_{il} =0$ for any $i$.  So by (12) we have $\sum_l T^l_{ij;l}=0$ for any $i$, $j$.

 \vsv

Let us fix a point $p\in M^n$. Denote by $W\cong {\mathbb R}^{2n}$ the real tangent space of $M$ at $p$, and by $V\cong {\mathbb C}^n$ the space of type $(1,0)$ complex tangent vectors at $p$, and $J$ the almost complex structure of $M$. Since $T^c(\overline{e_i}, e_j)=0$ and $T^c(e_i,e_j)=2 \sum_{k=1}^n T^k_{ij}e_k$ under any unitary frame $e$, we have
\begin{equation*}
T^c(Jx,y)=T^c(x,Jy), \ \ \ \ T^c(Jx,y) = JT^c(x,y)
\end{equation*}
for any $x$, $y$ in $W$. Consider linear subspaces $K_1$, $K_2$ in $W$ defined by
\begin{eqnarray*}
 K_1 & = & \{ x\in W\ \mid \  T^c(x,u)=0 \ \ \forall \ u \in W \} , \\
 K_2 & = & \{ x\in W\ \mid \  \langle T^c(u,v), x \rangle =0 \ \forall \ u, v \in W \} .
 \end{eqnarray*}
Clearly $K_1$, $K_2$ are both $J$-invariant. Let $K_0=K_1\cap K_2$, and for $i=1$, $2$, write $K_i'=K_0^{\perp } \cap K_i$. Then we have orthogonal decomposition $K_i=K_0 \oplus K_i'$ for $i=1$, $2$. We claim that

\begin{lemma}
If the components of the torsion tensor under a unitary frame $e$ at $p$ satisfy the condition
\begin{equation}
\sum_{r=1}^n  T^r_{ij} T^k_{rl} =0
\end{equation}
for any $i$, $j$, $k$, $l$, then at the point $p$ we have the orthogonal decomposition $$W=K_0 \oplus K_1' \oplus K_2'.$$
\end{lemma}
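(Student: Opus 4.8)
The plan is to recognize the algebraic condition $\sum_{r=1}^n T^r_{ij}T^k_{rl}=0$ as the coordinate-free statement that the image of the Chern torsion is contained in its kernel, and then to extract the decomposition by elementary linear algebra in the Euclidean space $W$.

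First I would reinterpret the hypothesis. Set $t_{ij}=\sum_k T^k_{ij}e_k\in V$ and let $E=\mathrm{span}_{\mathbb C}\{t_{ij}\}\subseteq V$. Since $T^c(e_i,e_j)=2t_{ij}$, $T^c(e_i,\overline{e_j})=0$, and $T^c(\overline{e_i},\overline{e_j})=\overline{T^c(e_i,e_j)}$, the span $\mathcal I:=\mathrm{span}\{T^c(u,v):u,v\in W\}$ of the image of the torsion is the real subspace underlying $E\oplus\overline E$, so $K_2=\mathcal I^{\perp}$, while $K_1$ is exactly the kernel of the linear map $x\mapsto T^c(x,\cdot)$. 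Feeding $t_{ij}$ back into the torsion gives $T^c(t_{ij},e_l)=2\sum_{r,k}T^r_{ij}T^k_{rl}\,e_k$, so the hypothesis says precisely that $T^c(t_{ij},\cdot)\equiv 0$ for all $i,j$; since the $t_{ij}$ span $E$, and $T^c$ vanishes on $(0,1)$ and mixed arguments and is conjugate-symmetric on the $(0,1)$ ones, this is equivalent to $\mathcal I\subseteq K_1$. I would also note here that $E$ and $K_1$, hence also $K_2$, $K_0$, and the $K_i'$, do not depend on the chosen unitary frame, so the pointwise hypothesis is meaningful.

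Granting $\mathcal I\subseteq K_1$, the rest is formal. Inside $K_1$ the orthogonal complement of $\mathcal I$ is $K_1\cap\mathcal I^{\perp}=K_1\cap K_2=K_0$, so $K_1=\mathcal I\oplus K_0$ orthogonally, whence $K_1'=K_0^{\perp}\cap K_1=\mathcal I$. Dually, $\mathcal I\subseteq K_1$ gives $K_1^{\perp}\subseteq\mathcal I^{\perp}=K_2$; since also $K_0\subseteq K_2$ and $K_0\perp K_1^{\perp}$ (because $K_0\subseteq K_1$), the count $\dim K_0+\dim K_1^{\perp}=\dim K_0+2n-\dim K_1=2n-\dim\mathcal I=\dim K_2$ forces $K_2=K_0\oplus K_1^{\perp}$, so $K_2'=K_0^{\perp}\cap K_2=K_1^{\perp}$. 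Therefore $W=K_1\oplus K_1^{\perp}=\mathcal I\oplus K_0\oplus K_1^{\perp}=K_1'\oplus K_0\oplus K_2'$, and the three summands are pairwise orthogonal: $\mathcal I=K_1'\subseteq K_1$ is orthogonal to $K_0\subseteq\mathcal I^{\perp}$ and to $K_2'=K_1^{\perp}$, and $K_0\subseteq K_1$ is orthogonal to $K_1^{\perp}$.

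The only real content, and the step I expect to require the most care, is the first: translating the index identity into ``$\mathrm{Im}\,T^c\subseteq\ker T^c$'' over all of $W$ means keeping careful track of the three types of inputs and of the passage between the Hermitian and the Riemannian inner products, so that the complex statement $E\subseteq N$ (with $N$ the $(1,0)$-part of $K_1$) genuinely corresponds to the real statement $\mathcal I\subseteq K_1$. Once that is in place, the linear algebra above is routine and, notably, uses nothing about $R=0$ or about compactness.
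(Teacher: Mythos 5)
Your argument is correct and essentially the same as the paper's: the paper packages the hypothesis as $A_XA_Y=0$ for the operators $A_X(e_i)=\sum_j T^j_{Xi}e_j$ and concludes $\sum_{X}\mathrm{Im}(A_X)\subseteq\bigcap_{X}\ker(A_X)$, i.e.\ $N_2^{\perp}\subseteq N_1$, which is precisely your observation that the image of the torsion lies in its kernel ($\mathcal{I}\subseteq K_1$). The remaining orthogonal-complement bookkeeping yielding $W=K_0\oplus K_1'\oplus K_2'$ is the same in both, the only difference being that the paper carries it out on the $J$-invariant $(1,0)$-spaces $N_i$ while you work directly with the real spaces $K_i$.
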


\begin{proof}
Note that all the subspaces $K_0$, $K_i$, and $K_i'$ are $J$-invariant, so we may consider their corresponding complex subspaces $N_0$, $N_i$, and $N_i'$ in $V$ instead, where $i=1$, $2$. Clearly, $N_1$ consists of all $X\in V$ such that $T_{X\ast }^{\ast }=0$, and $N_2$ consists of all $X\in V$ such that $T^X_{\ast \ast }=0$.

\vsv

Here and from now on we adopted the convention that $T^X_{ij} = \sum_k \overline{X_k} T^k_{ij}$ for $X= \sum_k X_k e_k$ in $V$. This is because $T^k_{ij}$ is conjugate linear in the upper position.

\vsv

As in the proof of Theorem 2 of \cite{YZ}, for $X=\sum_i X_ie_i \in V$, we will denote by $A_X$ the linear transformation from $V$ to $V$ defined by
$$ A_X (e_i) = \sum_{j=1}^n T^j_{Xi} e_j = \sum_{k,j=1}^n X_k T^j_{ki} e_j\ .$$

With this notation, $(14)$ is simply saying that $A_XA_Y=0$ for any $X$, $Y$ in $V$. In particular, $(A_X)^2=0$.  So $N_2$ is the orthogonal complement of $\sum_{X\in V} \mbox{Im}(A_X)$, where $ \mbox{Im}(A_X)$ stands for the image space of $A_X$. In the mean time, it is clear that $N_1=\bigcap_{X\in V} \ker (A_X)= \{ X\in V \mid A_X=0\}$.

\vsv

Since $A_XA_Y=0$ for any $X$, $Y$ in $V$, we have $ \sum_{X\in V} \mbox{Im}(A_X) \subseteq \bigcap_{X\in V} \ker (A_X)$. So $N_2^{\perp } \subseteq N_1$. Therefore, $V=N_0\oplus N_1' \oplus N_2'$, where $N_1=N_0\oplus N_1'$, $N_2=N_0\oplus N_2'$, and all the direct sums are orthogonal. This completes the proof of the lemma.
\end{proof}

\noindent {\em Remark:}
(1). This lemma says that, when the equation $(14)$ holds, or equivalently $T^k_{ij; l}=0$ by $(12)$, the torsion tensor obeys a nice decomposition which resembles those on a warped torus of the BSV type \cite{BSV}.

\vsv

(2). Notice that for any $0\neq X\in N_1'$, there exists some $Y$, $Z$ in $V$ (necessarily in $N_2'$) such that $T^X_{YZ}\neq 0$, as otherwise $X$ would be in $N_2$ by definition. Similarly, for any $0\neq X\in N_2'$, there must be $Y$ and $Z$ (where $Y\in N_1'$ and $Z\in N_2'$ necessarily) such that $T^Y_{XZ}\neq 0$.

\vsv

Next, let us examine the behavior of the almost complex structure under the above decomposition. We have the following

\begin{lemma}
Let $(M^n,g)$ be a Hermitian manifold with $R=0$ identically, and assume that $(14)$ holds everywhere. In  an open subset of $M$ where $K_0$, $K_1'$ and $K_2'$ form distributions, we can write  $J=J_0+J_1+J_2$ for the decomposition of the almost complex structure under the decomposition $W=K_0\oplus K_1' \oplus K_2'$. Then we have
$\nabla_x J_0=\nabla_x J_1 = 0$ for any  $x\in W$, $\nabla_yJ_2=0$  for any $y\in K_2$,  and $\nabla_y J_2 \neq 0$ for any $\ 0\neq y \in K_1'$.
\end{lemma}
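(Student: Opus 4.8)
The plan is to compute all the relevant covariant derivatives of $J_0$, $J_1$, $J_2$ using the structure equations from Lemma~4 together with the torsion decomposition $V = N_0 \oplus N_1' \oplus N_2'$ from Lemma~6. The key observation is that $\nabla J = 0$ identically (since $\nabla$ is the Levi-Civita connection and $g$ is Hermitian with respect to $J$, and actually $\nabla J$ measures the failure of K\"ahlerness but is controlled entirely by the Chern torsion), so $\nabla_x J_0 + \nabla_x J_1 + \nabla_x J_2 = 0$ for every $x$; hence it suffices to understand $\nabla_x$ applied to the three projection operators $P_0, P_1, P_2$ onto $K_0, K_1', K_2'$, and to show $\nabla_x P_0 = \nabla_x P_1 = 0$, after which $\nabla_x P_2 = 0$ follows. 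First I would re-express the distributions intrinsically: $K_1 = \ker T^c$ and $K_2 = (\operatorname{Ran} T^c)^\perp$ are parallel-transport-natural objects, and by Lemma~4 the equations $T^k_{ij;l} = \sum_r T^r_{ij} T^k_{rl}$ — which under hypothesis $(14)$ reduce to $T^k_{ij;l} = 0$ — say precisely that $\nabla$ in holomorphic-type directions kills the torsion. The vanishing $\nabla_y J_2 = 0$ for $y \in K_2$ and $\nabla_y J_2 \neq 0$ for $0 \neq y \in K_1'$ will then come from the second identity in Lemma~4, namely $T^k_{ij;\overline l} = T^l_{ij;\overline k}$, combined with Remark~(2) after Lemma~6.

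Concretely, the steps I would carry out are as follows. (a) Show $K_0$ and $K_1'$ are $\nabla$-parallel distributions: since $T^c$ is $\nabla$-parallel in the directions dictated by $(12)$–$(13)$ with $(14)$, and $K_1 = \ker T^c$, $K_2^\perp \subseteq K_1$ is built from $\operatorname{Ran} T^c$, the whole flag is parallel; here the vanishing of $\nabla$ in barred directions on the relevant pieces uses that $R=0$ forces the curvature terms in $(11)$ to drop out. (b) Deduce $\nabla_x P_0 = \nabla_x P_1 = 0$ for all $x$, hence $\nabla_x J_0 = \nabla_x J_1 = 0$, since $J_i = P_i J P_i$ and $J$ itself is $\nabla$-parallel. (c) For $y \in K_2$: the obstruction to $\nabla_y P_2 = 0$ is that $K_2'$ need not be parallel in $K_1'$-directions; but restricted to $y \in K_2$ the mixed second-derivative symmetry $T^k_{ij;\overline l} = T^l_{ij;\overline k}$ together with $\sum_l T^l_{ij;l}=0$ (balancedness, noted just before the statement of Lemma~4's consequences) forces $\nabla_y$ to preserve $K_2'$ as well, giving $\nabla_y J_2 = 0$. (d) For $0 \neq y \in K_1'$: by Remark~(2) there exist $Y, Z$ with $T^X_{YZ} \neq 0$ type relations; one shows $\nabla_y P_2 \neq 0$ by pairing $\nabla_y J_2$ against a test vector built from the torsion component witnessing that $y \notin K_2$, so that a nonzero torsion coefficient appears directly as (a component of) $\nabla_y J_2$.

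I expect the main obstacle to be step~(d), the sharpness statement $\nabla_y J_2 \neq 0$ for every nonzero $y \in K_1'$: steps (a)–(c) are essentially bookkeeping with the parallel flag and the Lemma~4 identities, but (d) requires producing, for an \emph{arbitrary} $0 \neq y \in K_1'$, an explicit direction in which $\nabla_y J_2$ fails to vanish, and matching it to the non-vanishing torsion guaranteed abstractly by Remark~(2). The delicate point is that Remark~(2) gives existence of $Y,Z$ with a nonzero torsion component, but one must check that this same component actually survives into $\nabla_y J_2$ rather than cancelling against another term — this is where the quadratic relation $(14)$ (equivalently $A_X A_Y = 0$) is used once more, to kill the potentially competing terms. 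I would also double-check the subtle sign/conjugation conventions ($T^k_{ij}$ conjugate-linear in the upper index, $T^X_{ij} = \sum_k \overline{X_k} T^k_{ij}$) since these govern whether the $\overline l$-type identity in Lemma~4 yields a symmetry or an antisymmetry in the relevant contraction.
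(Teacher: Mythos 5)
Your proposal breaks down at its very first ``key observation'': $\nabla J=0$ is false in this setting. For a Hermitian metric the Levi--Civita connection satisfies $\nabla J=0$ if and only if $g$ is K\"ahler, and the lemma is aimed precisely at the non-K\"ahler case where the Chern torsion is nonzero; your own parenthesis (``$\nabla J$ measures the failure of K\"ahlerness'') contradicts the claim. The error is not cosmetic: from $\nabla J=0$ together with your step (b) ($\nabla_x P_0=\nabla_x P_1=0$ for all $x$, hence $P_2=\mathrm{Id}-P_0-P_1$ parallel as well) one would conclude $\nabla_x J_2=0$ for \emph{every} $x$, which is exactly the negation of the sharpness statement $\nabla_y J_2\neq 0$ for $0\neq y\in K_1'$ that the lemma asserts; so steps (c) and (d) cannot be carried out inside your framework no matter how the bookkeeping is arranged. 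Step (a) is also unjustified: hypothesis $(14)$ together with $(12)$ gives only $T^k_{ij;l}=0$, while in the barred directions $(13)$ gives merely the symmetry $T^k_{ij;\overline{l}}=T^l_{ij;\overline{k}}$, not vanishing; thus the torsion is not $\nabla$-parallel and the flag $K_0$, $K_1$, $K_2$ need not be parallel in all directions. (In the paper, parallelism of $K_2$ is not free: it is the content of Claims 1 and 2, proved afterwards via a totally geodesic foliation argument and a Riccati equation, and only in the threefold case.)

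The paper's proof is much more direct and never passes through projection operators: using formulas (5) and (6), which express $\nabla$ through $\nabla^c$ and the torsion, one computes $(\nabla_{e_i}J)(e_j)=0$ and $(\nabla_{\overline{e_i}}J)(e_j)=2\sqrt{-1}\sum_k T^i_{jk}\,\overline{e_k}$, i.e.\ $\nabla J$ \emph{is} the torsion, sitting in the conjugate directions. The lemma is then read off: if the differentiation direction lies in $K_2$ (upper index $i$ with $e_i\in N_2$) the right-hand side vanishes, giving $\nabla_y J=0$ and in particular $\nabla_y J_2=0$ for $y\in K_2$; the blocks of $\nabla_x J$ touching $K_0$ and $K_1'$ vanish for all $x$ because a lower index $j$ with $e_j\in N_1$ kills $T^i_{jk}$, giving $\nabla_x J_0=\nabla_x J_1=0$; and for $0\neq y\in K_1'$ (upper index in $N_1'$) Remark (2) supplies $j,k$ (necessarily in $N_2'$) with $T^i_{jk}\neq 0$, so the $K_2'$-block of $\nabla_y J$, i.e.\ $\nabla_y J_2$, is nonzero. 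If you want to salvage your write-up, replace the false parallelism of $J$ by this explicit computation of $\nabla J$; the conventions you flagged ($T^X_{ij}=\sum_k\overline{X_k}T^k_{ij}$, conjugate-linearity in the upper slot) are indeed what matches the upper index of $T$ with the $K_1'$-direction of differentiation, and no extra cancellation argument of the kind you anticipate in (d) is needed.
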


\begin{proof}
Under any local unitary frame $e$ in $M$, by using formula (5) and (6), we get through a straight forward computation the following:
\begin{eqnarray}
(\nabla_{e_i} J) (e_j) & = & 0 \\
(\nabla_{\overline{e_i}}J) (e_j) & = & 2\sqrt{-1} \sum_{k=1}^n T^i_{jk} \overline{e_k}
\end{eqnarray}
Then the lemma is a direct consequence of (15), (16) and the remarks above, so we will omit the details here.
\end{proof}

\vs

Now let us focus on the $3$-dimensional case. In this case we will show that equation $(14)$ always holds:

\begin{lemma}
Let $(M^3,g)$ be a compact Hermitian manifold with $R=0$ identically. Then the equality $(14)$ holds everywhere.
\end{lemma}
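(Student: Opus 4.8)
The plan is to exploit the very small size of the Chern torsion in complex dimension three together with the balanced condition. First I would use the balanced condition $\sum_l T^l_{il}=0$, valid by Gauduchon's theorem since $M$ is compact and $R=0$, and combine it with identity (12) from Lemma 1, which reads $T^k_{ij;l}=\sum_r T^r_{ij}T^k_{rl}$. Contracting over $l=k$ and using $\sum_k T^k_{rk}=0$ (balancedness) gives $\sum_l T^l_{ij;l}=0$; but more importantly, I want to turn the tensorial equation (14), i.e. $A_XA_Y=0$ for all $X,Y\in V$, into something that follows from a dimension count. In terms of the operators $A_X$ (conjugate-linear in $X$), note that each $A_X\colon V\to V$ is a $3\times 3$ matrix, and by the symmetry $T^k_{ij}=-T^k_{ji}$ the image of $A_X$ and the ``flag'' structure of the torsion are severely constrained. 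The heart of the matter is a pointwise, purely linear-algebraic claim about antisymmetric tensors $T\in \Lambda^2 V^*\otimes V$ on a $3$-dimensional space.

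Next I would set up that linear algebra carefully. Fix $p$ and write $T$ for the torsion form; since $\dim V=3$, we have $\Lambda^2 V\cong V^*$, so $T$ can be viewed as an endomorphism-valued object, essentially a bilinear map $V^*\times V\to V$, and one can encode it by a single $3\times 3$ matrix $S$ via $T^k_{ij}=\epsilon_{ijm}S^{mk}$ (up to conjugation conventions, using the index convention $T^X_{ij}=\sum_k\overline{X_k}T^k_{ij}$ stated in the text). Under this identification $A_X$ becomes (up to the conjugation and an $\epsilon$) multiplication by a matrix built from $S$ and $X$, and the balanced condition $\sum_l T^l_{il}=0$ translates into $S$ being symmetric (or trace-free in an appropriate slot). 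The claim (14), $A_XA_Y=0$ for all $X,Y$, should then reduce to: a symmetric $3\times 3$ matrix $S$ with a certain reality/rank property must satisfy $S\cdot(\text{adj stuff})\cdot S=0$; I expect this to follow because a symmetric matrix times the cofactor-type expression coming from $\Lambda^2$ vanishes identically by the Cayley–Hamilton / $\det$ identities in dimension $3$, \emph{provided} one also uses the first Bianchi-type identity that $R=0$ forces on the torsion, namely (13): $T^k_{ij;\overline l}=T^l_{ij;\overline k}$, together with (12). So the argument is: (a) reduce $T$ to the matrix $S$; (b) use balancedness to symmetrize $S$; (c) show algebraically that for symmetric $S$ on a $3$-space the composition $A_XA_Y$ is proportional to an expression that vanishes identically; and if a residual term survives, (d) kill it using (12)–(13) and a maximum-principle / Stokes argument on the compact manifold (integrate $|T|^2$ or $\sum T^k_{ij;l}\overline{T^k_{ij;l}}$ and use that its integral vanishes by the divergence structure coming from $\sum_l T^l_{ij;l}=0$).

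I would structure the write-up so that the global input is isolated in one place: from (12), $\|A_\bullet\|$-type quantities have covariant derivatives controlled by $T\ast T$, and a Bochner/Stokes computation on the compact $M$ shows $\int_M \sum |{\textstyle\sum_r} T^r_{ij}T^k_{rl}|^2 \le 0$ once the cross terms are arranged using balancedness and (13) — this is where compactness is essential and is the step I expect to be the main obstacle, since making the integration by parts close up requires the pointwise algebra of (b)–(c) to already cut the ``generic'' part of the torsion down to the flag form. In other words, the linear algebra in dimension $3$ does most of the work pointwise, and a short global argument removes the last piece; the delicate point is matching the two so that no term is left uncontrolled. Concretely I anticipate: after (b), the only possibly-nonzero components of $S$ can be put (by a unitary change of frame, i.e. $SO(6)\cap U(3)$ acting) into a normal form with at most the ``BSV shape,'' and then $A_XA_Y=0$ is immediate; the global argument is only needed to justify that this normal form can be achieved pointwise in a way compatible with $R=0$, i.e. that the ``extra'' invariants of $S$ not seen by the flag are forced to vanish. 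Thus the proof reduces to: classify symmetric $S\in \mathrm{Sym}^2(\mathbb C^3)$ up to $U(3)$, observe the resulting $A_X$ are all square-zero and mutually annihilating except for one exceptional orbit, and rule out that orbit by feeding (12)–(13) into a Stokes identity on $M$.
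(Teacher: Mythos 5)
There is a genuine gap, and it sits exactly at the point you flag as ``the main obstacle.'' The missing ingredient is a pointwise quadratic identity on the torsion, which the paper gets by contracting (12) over $k=i$ (not over $l=k$, which is the contraction you perform): using balancedness, $\sum_k T^k_{kj;l}$ vanishes, so (12) yields $\sum_{r,k}T^r_{jk}T^k_{rl}=0$ for all $j,l$, i.e.\ $\mathrm{tr}(A_XA_Y)=0$ for all $X,Y\in V$. In your $S$-matrix language (which is a perfectly good encoding, and balanced $\Leftrightarrow$ $S$ symmetric is correct), this trace identity says precisely that all $2\times 2$ minors of $S$ vanish, i.e.\ $\mathrm{rank}(S)\le 1$, $S=vv^{T}$; and only then is $A_XA_Y=0$ immediate, since each $A_X$ has image spanned by $v$ and kills $v$. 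Your pointwise claim (c) — that for a \emph{symmetric} $S$ the compositions $A_XA_Y$ vanish by Cayley--Hamilton/cofactor identities — is false without that rank reduction: take $T^k_{ij}=\epsilon_{ijk}$ (so $S=\mathrm{Id}$, balanced), for which $A_{e_1}^2\neq 0$. Relatedly, your final picture is inverted: the symmetric $S$ with $A_XA_Y=0$ form the exceptional (rank $\le 1$) locus, while the generic $U(3)$-orbit of symmetric $S$ violates (14), so there is not just ``one exceptional orbit'' to rule out, and no amount of unitary normal-form classification alone will do it.

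Your fallback (d), a Bochner/Stokes argument to kill the residual part, is only a hedge as written and is not needed if the contraction above is used; the paper's actual proof is short and pointwise after that: it writes the six torsion components $a_i=T^i_{jk}$, $b_i=T^j_{ij}$, records $\mathrm{tr}(A_{e_i}^2)=2(b_i^2-a_ja_k)=0$ and $\mathrm{tr}(A_{e_i}A_{e_j})=2(a_kb_k-b_ib_j)=0$, normalizes $b_1=0$ by rotating $e_2,e_3$, and checks case by case that $A_{e_l}A_{e_m}=0$. (A global Bochner argument showing $\nabla T=0$ does exist and would also give (14), but it requires the commutation identity $T^k_{ij;p\overline q}=T^k_{ij;\overline q p}$ from $R=0$ together with (13) and a careful second-derivative computation of $|T|^2$ — none of which is carried out in your sketch, and your sketch even concedes it presupposes the pointwise reduction it is meant to rescue.) So: keep the $S$-encoding if you like, but you must first derive $\mathrm{tr}(A_XA_Y)=0$ from (12) plus balancedness; with it your approach closes, without it the proof does not go through.
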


\begin{proof}
Since $M$ is compact and $R=0$, by the equality case of Gauduchon's inequality in \cite{Gauduchon2}, we know that $g$ is balanced. So $\sum_k T^k_{jk}=0$. By letting $k=i$ and sum up in $(12)$, we get $\sum_{r,k} T^r_{jk} T^k_{rl} =0$ for any $j$, $l$. In other words, we have
\begin{equation}
 \text{tr} (A_X A_Y) = 0 ,   \ \ \forall \ \ X, Y \in V
\end{equation}
We will show that, when $n=3$, the above equality $(17)$ actually implies $A_XA_Y=0$ for any $X$, $Y$ in $V$, which is $(14)$.

\vsv

Let $e$ be a unitary frame. Write $a_i = T^i_{jk}$, $b_i=T^j_{ij}$ where $(ijk)$ is a cyclic permutation of $(123)$. These $6$ terms are all the components of $T^c$ since $g$ is balanced. We have:
$$
A_{e_1} = \left[ \begin{array}{ccc} 0 & 0 & 0 \\ b_2 & b_1 & a_3 \\ -b_3 & -a_2 & -b_1 \end{array} \right] , \ \ A_{e_2} = \left[ \begin{array}{ccc} -b_2 & -b_1 & -a_3  \\ 0 & 0 & 0  \\ a_1 & b_3 & b_2 \end{array} \right] , \ \  A_{e_3} = \left[ \begin{array}{ccc}  b_3 & a_2 & b_1 \\ -a_1 & -b_3 &  -b_2 \\0 & 0 & 0 \end{array} \right]
$$
Therefore,
$$ \text{tr}(A_{e_i}^2)=2 (b_i^2-a_ja_k) = 0, \ \ \text{tr}(A_{e_i}A_{e_j})= 2(a_kb_k - b_ib_j) = 0 $$
where $(ijk)$ is any cyclic permutation $(123)$. Now let us fix a point $p$ and also fix $e_1$, and rotate $\{ e_2, e_3\}$ if  necessary, we may assume that $T^2_{12}=0$. That is, we may assume that $b_1=0$. The above equalities implies that $a_2a_3=b_2b_3=a_2b_2=a_3b_3=0$.

\vsv

If $a_3\neq 0$, then we have $b_3=a_2=0$. So the only possibly non-zero terms are $a_1$, $a_3$, and $b_2$. Also, $b_2^2=a_1a_3$. From this, it is easy to check that $A_{e_l}A_{e_m}=0$ for any $1\leq l, m\leq 3$. So $(14)$ holds. When both $a_2=a_3=0$, then the only possibly non-zero term would be $a_1$. In this case clearly $(14)$ holds.
\end{proof}

\vsv

So for a compact Hermitian threefold $(M^3,g)$ with $R=0$, we have the orthogonal decomposition $T_M = N_0 \oplus N_1' \oplus N_2'$ at any $p\in M$, where $T_M=V$ is the holomorphic tangent space of $M$ at $p$, and $N_i$, $N_i'$ are the complex subspace of $V$ corresponding to the real kernel spaces $K_i$, $K_i'$.

\vsv

Now let us assume that $g$ is not K\"ahler, and let $U\subseteq M^3$ be the open subset where $T^c\neq 0$. For any $p\in U$, since $N_1'$ needs to be at least one dimensional, and $N_2'$ needs to be least two dimensional, so we must have $N_0=0$ and $T_M= N_1\oplus N_2$. Let us choose a local unitary frame $e$ such that $e_3\in N_1$. Then $T^3_{12}\neq 0$ is the only non-zero components of $T^c$. By $(12)$-$(14)$, we have
$$ T^k_{ij;l}=0, \ \ T^3_{12;\overline{1}} = T^3_{12;\overline{2}} =0.$$

In the open subset $U\subseteq M$, let $V=N_1\oplus N_2$ be the decomposition of the holomorphic tangent bundle $T_M$, and $W=K_1\oplus K_2$ be the corresponding $J$-invariant orthogonal decomposition of the real tangent space of $M$. We make the following claims:

\vsv

{\bf Claim 1:} {\em In $U$, $K_2$ is a totally geodesic foliation with complete leaves.}

\vsv

{\bf Claim 2:} {\em  For any $p\in U$, the leaves of $K_2$ near $p$ are parallel to each other.}

\vsv

Fix any $p\in U$. In a small neighborhood of $U$, let $e$ be a unitary frame such that $e_3$ lies in $N_1$. This is the unique type $(1,0)$ tangent direction $X$ (up to scalar multiple) such that $T^i_{jX}=0$ for any $i$, $j$.  Denote by $\varphi$ the coframe dual to $e$. As in \cite{YZ}, write $\nabla e = \theta_1e+\overline{\theta_2}\overline{e}$ for the connection form, then the condition $R=0$ is the same as
\begin{eqnarray}
\Theta_1 & = & d\theta_1 - \theta_1  \theta_1 - \overline{\theta_2}\theta_1 \ = \ 0\\
\Theta_2 & = & d\theta_2 - \theta_2  \theta_1 - \overline{\theta_1}\theta_2 \ = \ 0
\end{eqnarray}
Since $0\neq \lambda = T^3_{12}$ is the only non-zero component of $T^c$ under $e$, by Lemma 2 of \cite{YZ}, we have
$$ \theta_2 = \left[ \begin{array}{cc} \beta E & 0 \\ 0 & 0 \end{array} \right] , \ \ \ \text{where} \ \ E = \left[ \begin{array}{cc} 0 & 1 \\ -1 & 0 \end{array} \right]  \ \ \text{and} \ \ \ \beta = \overline{\lambda } \varphi_3. $$
Let us write
$$ \theta_1 = \left[ \begin{array}{cc} \chi & \xi \\ - \xi^{\ast } & \alpha \end{array} \right]   . $$
Since $\theta_1$ is skew-Hermitian, and $E\chi +\ ^t\!\chi E=\text{tr}(\chi )E$, we get from $(18)$ and $(19)$ that
\begin{equation}
 d\chi = \chi \chi - \xi \xi^{\ast } , \ \ \ d\xi = \chi \xi + \xi \alpha ,  \ \ d\alpha = - \xi^{\ast } \xi;
 \end{equation}
\begin{equation}
 d\beta = \beta \wedge \text{tr}(\chi ), \ \ \ \beta \wedge E\xi =0.
 \end{equation}
From the second equation in $(21)$,  we know that the entries of $\xi$ are multiples of $\varphi_3$:
$$ \xi = v \varphi_3 =  \left[ \begin{array}{c} a \\ b \end{array} \right]  \varphi_3. $$
By the structure equation $d\varphi = - ^t\!\theta_1 \varphi - \ ^t\!\theta_2\overline{\varphi}$, we obtain
\begin{equation}
d\varphi_3 = - (a\varphi_1+ b  \varphi_2+\alpha )  \wedge \varphi_3 .
\end{equation}
Since $K_2$ is the distribution annihilated by $\{ \varphi_3, \overline{\varphi_3}\}$, the above identity and its conjugation show that $K_2$ is a foliation.

\vsv

To see that $K_2$ is a totally geodesic foliation, we need to show that $\langle \nabla_XY, e_3\rangle =0$ for any $X$, $Y$ in $K_2$, or equivalently,
$$ (\theta_1)_{i3}(e_j) = (\theta_1)_{i3} (\overline{e_j}) = (\theta_2)_{i3}(e_j) = (\theta_2)_{i3} (\overline{e_j}) =0$$
for any $i$, $j$ in $\{ 1, 2\}$. As  $(\theta_2)_{i3}=0$, and $(\theta_1)_{i3}$ is given by $\xi$ which is proportional to $\varphi_3$, we know that $K_2$ is a totally geodesic foliation in $U$.

\vsv

Since $ T^3_{12;k} =0$ for any $k$ and $T^3_{12;\overline{1}} = T^3_{12;\overline{2}} = 0$, we know that along any leaf of $K_2$, $\lambda$ is a constant function thus remains non-zero, so the leaves of $K_2$ are complete in $U$. This concludes the proof of Claim 1.

\vsv

Next let us prove Claim 2. It is equivalent to $K_1$ being a foliation, and equivalent to the condition that within $U$, the decomposition $W=K_1\oplus K_2$  gives a local metric product splitting. It suffices to show that $\xi =0$ at $p$.

\vsv

Let $\sigma : {\mathbb R} \rightarrow U$ be the constant-speed
geodesic contained in the leaf of $K_2$ through $p$, so that $\sigma
'(0)=e_1+\overline{e_1}$. Write $\sigma'(t)=X$. By Lemma 3,
$J_2=J|_{K_2}$ is constant along the leaves of $K_2$, so we may
choose our unitary frame $e$ in a neighborhood of $\sigma$ so that
$e_1$, $e_2$ are parallel along $\sigma$. This implies that $\chi
(X)=0$. We also have $\alpha (X)=0$ since $\overline{\alpha } = -
\alpha$, and $\varphi_1(X)=1$, $\varphi_2(X)=\varphi_3(X)=0$. The
second equation
 in $(20)$ now gives
$$ dv \varphi_3 - v (a\varphi_1 + b\varphi_2 +\alpha ) =
\chi \xi + \xi \alpha ,$$
when applied on the vectors $(X,e_3)$, we get
$$ X(a)-a^2 =0.$$
So $a(t)$ satisfies the Riccati equation along the geodesic $\sigma$.
Since solutions to the equation blows up in finite time unless the
initial condition is trivial, we know that $a$ must be zero at $p$.
Similarly, $b=0$ at $p$, and this completes the proof of Claim 2.

\vs

{\bf Claim 3:} {\em  The universal covering space $ \pi : \widetilde{M} \rightarrow M$ admits a product structure $ \widetilde{M} = Y_1\times Y_2 $, where $ Y_1\cong {\mathbb R}^2$ and $ Y_2\cong {\mathbb R}^4$, such that within the open subset $\pi^{-1}(U)$, the $Y_2$ factor are given by the leaves of $K_2$.}

\vsv

Since $M$ is a complex manifold, and it is well known that a flat metric $g$ is real analytic, any local splitting spreads to a global splitting on the universal cover. Here, however, we want to make sure that the extended splitting again respect the condition that $T^3_{12}$ is the only possibly non-zero component of $T^c$ when $e_3$ is in the $Y_1$ direction. To see this, let $\{ U_a\} _{ a\in A } $ be the connected components of $\pi^{-1}(U)$. Each $U_a$ is  isometric to the product $\Sigma_a \times L_a$ where $L_a\cong {\mathbb R}^4$, $\Sigma_a$ is an open subset of the flat ${\mathbb R}^2$ and the $L_a$ factor are given by the leaves of $K_2$.

\vsv

Given any $a$, $b\in A$, we claim that the affine subspaces $L_a$ and $L_b$ in $\widetilde{M}={\mathbb R}^6$ are parallel to each other. To this end, let $\sigma$ be a line segment in ${\mathbb R}^6$ which is the shortest path connecting $L_a$ and $L_b$. Then $\sigma$ is perpendicular to both $L_a$ and $L_b$. Consider the tangent vector field $X=\sigma'(t)$ along $\sigma$. Within $U_a$, as $X$ lives in $K_1$, $JX$ is parallel along $\sigma \cap U_a$ by Lemma 3. Since $g$ is  real analytic, $JX$ is parallel along the entire $\sigma$. Now as both $L_a$ and $L_b$ are perpendicular to $X$ and $JX$, they must be parallel to each other.

\vsv

Note that by Claim 3 and Lemma 3, we know that the complex structure on $\widetilde{M}$ is actually a warped complex structure in the sense of \cite{BSV}, namely, if we write $J=J_1+J_2$ for the decomposition of the almost complex structure, then $J_1$ is constant, and makes $Y_1$ the flat ${\mathbb C}$, and at any $(y_1, y_2) \in \widetilde{M}$, $J_2$ is given by $J_{f(y_1)} \in Z_2$ where $Z_2=SO(4)/U(2)\cong {\mathbb P}^1$ is the space of all complex structures on ${\mathbb R}^4$ compatible with the metric and the orientation, and $f: Y_1\cong {\mathbb C} \rightarrow Z_2$ is a smooth map. As proved in \cite{BSV}, the integrability of $J$ corresponds to the holomorphicity of $f$. (See also the next section for an explicit calculation of this). Clearly, when the flat metric $g$ is not K\"ahler with respect to $J$, $f$ can not be a constant.

\vs

{\bf Claim 4:} {\em The leaves of $K_2$ are compact in $M$.}

\vsv

Let us denote by $\Gamma$ the deck transformation group of $M$. Replacing $M$ by a finite unbranched cover of it if necessary, we may assume that $\Gamma \cong {\mathbb Z}^6$ acting as translations in ${\mathbb R}^6$.  For $i=1$, $2$, let $p_i: \Gamma \rightarrow \Gamma_i$ be  the projection into the isometry group of the factors $Y_i$, with $\Gamma_i$ being the image.

\vsv

For any $\gamma (y_1, y_2) = (y_1+a, y_2+b)$ in $\Gamma$, since the
complex structure on $\widetilde{M}$ is preserved by $\gamma$, we
have $J_{f(y_1)} = J_{f(y_1+a)}$, where $f: Y_1={\mathbb C}
\rightarrow Z_2={\mathbb P}^1$ is the holomorphic map characterizing
$J$ as a warped complex structure. That says that any
$\Gamma_1$-orbit is contained in a level set of $f$, which is
necessarily discrete in $Y_1={\mathbb C}$. So $\Gamma_1$ is
discrete, which will imply that the leaves of $Y_2$ close up in $M$.

Indeed, let us take a leaf $F$ of the foliation of $Y_2$ in $M$, if
$F$ is not compact, then there will be a sequence $x_i$ in $F$ that
converges to a point $x_0 \in M$, such that $x_0 \not\in F$. Take a
sufficiently small neighborhood $U$ of $x_0$, inside $U$ the
foliation can be parameterized by $F_t$, where $t$ belongs to a
small open subset $V \subset Y_1$. We may assume that $F_0$ is the
one through $x_0$. By assumption $F_0$ is not in $F$, but there
exists $t_i \rightarrow 0$ such that $F_{t_i}$ is a part of $F$.

Now let us look at the picture on the universal cover. Take a point
$0$ over $x_0$ and a small neighborhood $\tilde{U}$ over $U$. The
pre-image $\pi^{-1}(F)$  is equal to the union of $\Gamma_1 \times
Y_2$. So if $\Gamma_1$ is discrete, then $\pi^{-1}(F)$ would be
closed in the universal cover, However in $\tilde{U}$, we have the
same picture of $F_{t_i}$ and $F$ as in $U$. This leads to a
contradiction.

\vsv

To summarize, we have proved that, if  $(M^3,g)$ is a compact,
non-K\"ahler, Hermitian manifold with flat Riemannian connection,
then a finite unbranched cover $M'$ of $M$ is isometric to $M_1\times M_2$,
where $(M_1,g_1)$ is a flat $2$-torus and $(M_2,g_2)$ is a flat $4$-torus,
and the complex structure $J$ on $M'$ is given by
$$ J=J_1+ J_{f(x_1)}$$
at the point $(x_1,x_2)\in M'$, where $J_1$ is a constant complex structure
 on $M_1$ compatible with $g_1$ and makes $M_1$ an elliptic curve, and
 $f: M_1 \rightarrow Z_2\cong {\mathbb P}^1$ is a holomorphic map from
 the elliptic curve into the space of oriented orthogonal complex structures
  on ${\mathbb R}^4$. In other words, $M'$ is a BSV $3$-torus.  This
  completes the proof of Theorem 1.

\vs

\vs

\section{The BSV-tori in dimension three}\label{BSV systematic}

\vsv

In this section, let us give a more detailed discussion on the BSV-tori in dimension three, and show that they are indeed non-K\"ahlerian, namely, as a complex manifold they do not admit any K\"ahler metric.  The readers are referred to \cite{BSV} for a much broader discussion on the subject, and here we will try to be explicit and also focus on the differential-geometric aspect.

\vsv

Following \cite{BSV}, let $Z_2$ be the set of all constant complex structures on ${\mathbb R}^4$ compatible with a fixed flat metric and orientation. Its elements are skew-symmetric orthogonal $4\times4$ real matrices, and with a choice of orientation, they can be expressed as
\begin{equation}
 J_{(a,b,c)}=\left[ \begin{array} {ll} aE & bE+cI \\ bE-cI & -aE \end{array}
 \right] , \ \ \ \ \mbox{where} \ \
 E= \left[ \begin{array} {cc} 0 & 1 \\ -1 & 0 \end{array} \right] ,
 \label{J representation}
\end{equation}
$I$ is the identity matrix, and $a^2+b^2+c^2=1$. Under the identification $S^2\cong {\mathbb P}^1={\mathbb C} \cup \{ \infty \}$, we have
\begin{equation}
 a=\frac{2x}{r^2+1}, \ \ b = \frac{2y}{r^2+1}, \ \ c=\frac{r^2-1}{r^2+1}, \ \ \ \mbox{where} \ \ r=|z|, \ z=x+iy \in {\mathbb C}\cup \{ \infty \}.
 \end{equation}
We will write the above $J_{(a,b,c)}$ simply as $J_z$.

\vsv

Now suppose that $(M_1,J_1,g_1)$ is a compact Hermitian manifold, and   $f: M\rightarrow {\mathbb P}^1$ a smooth map. Let $(M_2,g_2)$ be a flat $4$-torus, and consider the manifold $M=M_1\times M_2$, equipped with the Riemannian product metric $g=g_1\times g_2$, and the warped almost complex structure $J$ on $M$ giving by
\begin{equation}
J = J_1 + J_{f(y_1)}
\end{equation}
at $(y_1,y_2)\in M$.  Clearly, $J$ is orthogonal with respect to $g$, and as proved in \cite{BSV} and also in \cite{Burstall1}, the integrability of $J$ is equivalent to the holomorphicity of the map $f$. Let us verify the equivalence in this explicit special case, namely, let us prove the following

\begin{lemma}
The almost complex structure $J$ defined on $M=M_1\times M_2$ as above is integrable if and only if the map  $f: M_1\rightarrow {\mathbb P}^1$ is holomorphic.
\end{lemma}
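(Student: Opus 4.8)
The plan is to apply the Newlander--Nirenberg theorem and show that the Nijenhuis tensor $N_J$ of $J$ vanishes identically precisely when $f$ is holomorphic. The first step is to cut $N_J$ down to size using the product structure. Both slices $M_1\times\{y_2\}$ and $\{y_1\}\times M_2$ are $J$-invariant submanifolds, so the restriction of $N_J$ to $TM_1\times TM_1$ is the Nijenhuis tensor of $(M_1,J_1)$, which vanishes because $M_1$ is a Riemann surface, and its restriction to $TM_2\times TM_2$ is the Nijenhuis tensor of the constant (hence integrable) complex structure $J_{f(y_1)}$ on the flat fibre, which again vanishes. Since $N_J$ is a tensor, this leaves only the mixed part: $J$ is integrable if and only if $N_J(X,Y)=0$ for all $X\in TM_1$ and $Y\in TM_2$.

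Next, fix a local holomorphic coordinate $z=x+\sqrt{-1}\,y$ on $M_1$ (so $J_1\partial_x=\partial_y$) and linear coordinates $x_1,\dots,x_4$ on the flat $M_2$, and write $J\partial_{x_j}=\sum_k(J_f)_{kj}\partial_{x_k}$, where $J_f:=J_{f(z,\bar z)}$ is the matrix \eqref{J representation} with $(a,b,c)$ depending only on the point of $M_1$. Because the components of $\partial_y=J\partial_x$ and of $J\partial_{x_j}$ are functions of $z,\bar z$ alone, every Lie bracket entering $N_J(\partial_x,\partial_{x_j})$ is immediate, and a short computation gives
$$N_J(\partial_x,\partial_{x_j})=\bigl(\partial_y J_f-J_f\,\partial_x J_f\bigr)e_j,\qquad N_J(\partial_y,\partial_{x_j})=-\bigl(\partial_x J_f+J_f\,\partial_y J_f\bigr)e_j,$$
where $e_j$ denotes $\partial_{x_j}$ viewed as the $j$-th standard basis vector of the fibre. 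Since $J_f^2=-I$, multiplying the first identity on the left by $J_f$ turns it into the second, so integrability of $J$ is equivalent to the single matrix equation $\partial_y J_f=J_f\,\partial_x J_f$ on $M_1$.

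The heart of the matter is the algebraic behaviour of the twistor family $\{J_w\}_{w\in{\mathbb P}^1}$ defined by \eqref{J representation} and the subsequent parametrization. A direct computation --- it suffices to verify it at one point, say the one corresponding to $w=0$, and then invoke that $SO(4)$ acts transitively on ${\mathbb P}^1$ by holomorphic M\"obius transformations --- shows that
$$J_w\,\partial_w J_w=\sqrt{-1}\,\partial_w J_w,\qquad J_w\,\partial_{\bar w}J_w=-\sqrt{-1}\,\partial_{\bar w}J_w,$$
and in particular that $\partial_w J_w$ and $\partial_{\bar w}J_w$ are nonzero and linearly independent over ${\mathbb C}$ (they annihilate $T^{1,0}_{J_w}{\mathbb R}^4$ and $T^{0,1}_{J_w}{\mathbb R}^4$ respectively, which are distinct). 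Writing $\partial_x J_f=f_x\,(\partial_wJ)|_{w=f}+\overline{f_x}\,(\partial_{\bar w}J)|_{w=f}$ and likewise for $\partial_y J_f$, substituting into $\partial_y J_f=J_f\,\partial_x J_f$, and using the two eigen-relations to push $J_f$ past the derivatives, one finds that comparing the independent $\partial_wJ$ and $\partial_{\bar w}J$ components collapses the equation to $f_y=\sqrt{-1}\,f_x$, i.e. $\partial_{\bar z}f=0$. Hence $J$ is integrable exactly when $f$ is holomorphic.

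I expect the main obstacle to be bookkeeping rather than conceptual. One must pin down the orientation and sign conventions implicit in \eqref{J representation} carefully enough that the eigen-relations for $\partial_wJ_w$ --- and hence the conclusion ``holomorphic'' rather than ``anti-holomorphic'' --- come out with the correct signs, and one must be careful in the reduction of $N_J$ to its mixed component, in particular in checking that the mixed Nijenhuis values genuinely lie in $TM_2$, which is what legitimizes the formula $N_J(\partial_x,\partial_{x_j})=(\partial_yJ_f-J_f\partial_xJ_f)e_j$. Once the eigen-relations are established, everything else is a brief linear-algebra computation.
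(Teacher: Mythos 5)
Your proposal is correct, and its skeleton coincides with the paper's: you reduce $N_J$ to the mixed slots using the product structure and tensoriality, and then, testing against a parallel frame on the flat factor, arrive at the single matrix equation $\partial_y J_f = J_f\,\partial_x J_f$ on $M_1$ --- this is exactly the paper's condition $J'=J\dot J$ obtained from $\nabla_{J_1X}J\epsilon_i = J\nabla_XJ\epsilon_i$ (your displayed values of $N_J$ differ from the paper's convention by an overall sign, which is immaterial, and your observation that the second mixed identity is $J_f$ times the first matches the paper's implicit use of $J_1$-invariance). Where you genuinely diverge is in converting this equation into the Cauchy--Riemann equations: the paper substitutes the explicit matrix \eqref{J representation} together with $a\dot a+b\dot b+c\dot c=0$ to get the $3\times 3$ system for $(a,b,c)$, and then checks via the stereographic formulas that the system is equivalent to CR; you instead use the twistorial eigen-relations $J_w\partial_wJ_w=\sqrt{-1}\,\partial_wJ_w$, $J_w\partial_{\bar w}J_w=-\sqrt{-1}\,\partial_{\bar w}J_w$, linear independence of $\partial_wJ_w$ and $\partial_{\bar w}J_w$, and a chain-rule comparison of coefficients to collapse the equation to $f_y=\sqrt{-1}f_x$. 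This is a more conceptual route for the key step, it makes transparent why ``holomorphic'' rather than ``anti-holomorphic'' comes out, and it generalizes immediately to higher-dimensional flat factors; the paper's computation is more elementary but tied to the explicit $2$-sphere parametrization. Two small points to tighten in your version: the eigen-relation does hold at $w=0$ with the paper's conventions (a two-line check at $a=b=0$, $c=-1$), but your transport of it to all of ${\mathbb P}^1$ quietly assumes that conjugation by $SO(4)$ acts, in the coordinate $w$ of the given parametrization, by holomorphic M\"obius maps --- true, because conjugation rotates the $(a,b,c)$-sphere and stereographic projection intertwines rotations with $PSU(2)\subset PSL(2,{\mathbb C})$, but as stated it is an unproved assertion that skirts close to presupposing the complex structure of $Z_2$ you are exploiting; alternatively the eigen-relations can be verified globally by the same direct differentiation the paper performs. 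Also, the nonvanishing of $\partial_wJ_w$ (needed for linear independence) deserves a word, e.g.\ because the parametrization is an immersion. With these details supplied, your argument is complete.
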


\begin{proof}
As is well known, $J$ is integrable if and only if the Nijenhuis tensor
\begin{equation}
N_J(X,Y) : = [X,Y] -  [JX,JY] + J[JX,Y] + J[X,JY]
\end{equation}
vanishes identically, for any vector fields $X$, $Y$ in $M$. Since $[X,Y]=\nabla_XY-\nabla_YX$, $J_1$ is integrable, and $J$ is constant along the $M_2=T^4_{\mathbb R}$ direction, we get $N_J(X,Y)=0$ if $X$, $Y$ are both in the $M_1$ direction or both in the $M_2$ direction. So it suffices to verify $N_J(X,Y)=0$ for $X$ in $M_1$ and $Y$ in $M_2$. Since $\nabla_YJ_1=0$ and $\nabla_{JY}J_1=0$, we get
$$ N_J(X,Y) = \nabla_XY + J\nabla_{JX}Y -\nabla_{JX}JY + J\nabla_XJY.$$
Now let $\{ \epsilon_1, \ldots , \epsilon_4\}$ be the standard parallel frame on $M_2$. By taking $Y$ to be any $\epsilon_i$ in the above equality, we know that $N_J$ vanishes on $M$ if and only if
\begin{equation}
\nabla_{J_1\!X}J\epsilon_i = J \nabla_X J\epsilon_i
\end{equation}
for any $1\leq i\leq 4$ and any tangent vector $X$ in $M_1$. Let us write $J=J_z$ the $4\times 4$ matrix in $(23)$, where $z=f(y_1)$, and denote by $J'$, $\dot{J}$ its derivative
in the direction $J_1\!X$, $X$, respectively.  Then the identity $(27)$ is simply
\begin{equation}
J'=J\dot{J}.
\end{equation}
Using the expression of $J$ in $(23)$, and the fact $a\dot{a}+b\dot{b}+c\dot{c}=0$, we get
\begin{equation}
\left\{ \begin{array}{lll} a' = c\dot{b} -b\dot{c} \\ b' = a\dot{c} -c\dot{a} \\ c' = b\dot{a} -a\dot{b} \end{array} \right.
\end{equation}
Now if we  use the coordinate $z=x+iy$ and the stereographic projection formula $(24)$, then it is a straight forward computation to see that the above system is equivalent to the following
\begin{equation}
\left\{   \begin{array} {ll}  x'  =  \dot{y}  \\  y'  = -  \dot{x}  \end{array}  \right. ,
\end{equation}
which is just the Cauchy-Riemann equation. So $J$ is integrable if and only if the map $z=f(y_1)$ is holomorphic. \end{proof}

Of course the torus $T^4_{\mathbb R}$ in above lemma can be replaced by $T^{2k}_{\mathbb R}$ for any $k\geq 2$, and the lemma is still valid. This is Proposition 5.2 in \cite{BSV} or  Proposition 5.1 in \cite{Burstall1}.

\vsv

Following \cite{BSV}, we will call the above compact Hermitian manifold $(M,g,J)$ a {\em warped torus}, and we are particularly interested in the complex dimension three case, namely, when $M_1$ is a compact Riemann surface of genus $g(M_1)$, and $f$ is a non-constant holomorphic map from $M_1$ into ${\mathbb P}^1$, or equivalently, a non-constant meromorphic function on the curve $M_1$.   We will denote this compact Hermitian threefold by $M^3_f$.

\vsv

Note that for $g(M_1)=1$ and $f$ non-constant, these $M^3_f$ are the BSV $3$-tori defined in \cite{BSV}. When $M_1={\mathbb P}^1$ and $f$ is the identity map $\iota $, then $M_{\iota}$ is the twistor space over the flat $4$-torus $M_2$. For $g(M_1)\geq 2$, such $M^3_f$ include Calabi's pioneer construction in \cite{Calabi}.

\vsv

As a complex manifold, it is clear that the projection map $\pi_1: M^3_f \rightarrow M_1$ is a holomorphic submersion, and the fibers are flat complex $2$-tori, but are not isomorphic to each other in general, so $\pi_1$ is not a holomorphic fiber bundle. For any $y_2\in M_2$, the subset $C_{y_2}=M_1\times \{ y_2\} $ is a totally geodesic complex submanifold of $M^3_f$  and is holomorphically isometric to $M_1$, but $C_{y_2}$ does not vary holomorphically in $y_2\in M_2$.

\vsv

It seems that these $M^3_f$ form a rather interesting class of complex threefolds, and here we will satisfy ourselves by exploring their Hermitian geometry a little bit, and showing that they are always non-K\"ahlerian (for non-constant $f$).

\vsv

First let us choose a convenient local unitary frame $e$ on $M^3_f$. Let $f$ be any non-constant meromorphic function on $M_1$, and write $V_0=M_1\setminus \{ f=\infty \}$, $V_{\infty }=M_1\setminus \{ f=0 \}$. Let $D_1, \ldots , D_m$ be open subsets in $M_1$ such that their union is the entire $M_1$, and on each $D_j$ there exists a $(1,0)$-form $\psi_j$ with unit norm. Then the open subsets
 $$ U_{j0}=\pi_1^{-1}(D_j\cap V_0), \ \ U_{j\infty } = \pi_1^{-1}(D_j\cap V_{\infty }), \ \ 1\leq j\leq m, $$
 form an open covering of $M^3_f$. On each $U_{j0}$, we have a unitary coframe $\varphi$ where $\varphi_3=\pi_1^{\ast }\psi_j$, and
 \begin{eqnarray}
 \varphi_1 & =  & \frac{1}{\sqrt{2}\sqrt{1+|f|^2}} \{ f (dx_1- idx_3) + i (dx_2-idx_4)\} \\
 \varphi_2 & =  & \frac{1}{\sqrt{2}\sqrt{1+|f|^2}} \{ -i  (dx_1 + idx_3) + f (dx_2 + idx_4)\}
 \end{eqnarray}
at the point $(y_1, y_2)$ in $U_{j0}$, where $f=f(y_1)$, and $( x_1, \ldots , x_4)$ is the standard Euclidean coordinate on the universal cover $\widetilde{M_2}={\mathbb R}^4$. Note that away from the poles of $f$, the above expressions are well-defined, and it is easy to check that $\varphi$ is indeed unitary and of type $(1,0)$ as $J$ is defined by $(23)$-$(25)$. In each $D_{j\infty }$, a coframe can be given in a similar fashion, which we will omit.

\vsv

In $D_j$, we have the structure equation $d\psi_j=-\xi \psi_j$, $d\xi =\Xi$, where $\Xi$ is the curvature form of $M_1$. Under the unitary coframe $\varphi$ in $U_{j0}$, it is easy to see that the connection forms are given by
\begin{equation}
\theta_1 = \left[ \begin{array}{ccc}
\alpha & 0 & 0 \\ 0 & \alpha & 0 \\ 0 & 0 & \pi_1^{\ast } \xi  \end{array}
\right] , \ \ \ \theta_2 =
\left[ \begin{array}{ccc} 0 & \beta & 0 \\ -\beta & 0 & 0 \\ 0 & 0 & 0 \end{array}
\right] , \ \ \
\theta = \left[ \begin{array}{ccc}
\alpha & 0 & -\lambda \varphi_2 \\ 0 & \alpha & \lambda \varphi_1 \\
\overline{\lambda} \overline{\varphi_2} & -\overline{\lambda}
\overline{\varphi_1} & \pi_1^{\ast } \xi  \end{array} \right],
\label{two connection 1-forms}
\end{equation}
where $\lambda = T^3_{12}$, and
$$ \alpha = \frac{1}{2(1+|f|^2)} (fd\overline{f} - \overline{f} df), \ \ \ \ \beta = \overline{\lambda }\varphi_3 = - \frac{i}{1+|f|^2} df. $$
We have $d\alpha = \beta \overline{\beta}$, $d\beta = 2\beta \alpha$. From the structure equation $d\varphi = - \ ^t\! \theta_1 \varphi - \ ^t\! \theta_2 \overline{\varphi}$, we get
$$ d\varphi_1 = -\alpha \varphi_1+\beta \overline{\varphi_2}, \ \ \  d\varphi_2 = -\alpha \varphi_2-\beta \overline{\varphi_1}, \ \ \  d\varphi_3 = - \pi_1^{\ast }\xi \ \varphi_3 .$$
By taking exterior differentiation of $\beta = \overline{\lambda}\varphi_3$, we get
$$ (d\overline{\lambda} + 2\overline{\lambda} \alpha - \overline{\lambda} \ \pi_1^{\ast }\xi ) \wedge \varphi_3 = 0 , $$
so there will be a local smooth function $\mu$ in $U_{j0}$ such that
\begin{equation}
d\lambda - 2\lambda \alpha + \lambda \pi_1^{\ast } \xi =  \mu \overline{\varphi_3}.
\end{equation}
We compute the curvature form of the Chern connection $\Theta =
d\theta - \theta \wedge \theta$ as follows:
\begin{equation}
\Theta =    \left[ \begin{array}{ccc} |\lambda|^2 (\varphi_2\overline{\varphi_2} + \varphi_3\overline{\varphi_3})  & - |\lambda|^2 \varphi_2\overline{\varphi_1} & |\lambda|^2 \varphi_3\overline{\varphi_1} -\mu \ \varphi_2\overline{\varphi_3}  \\
- |\lambda|^2 \varphi_1\overline{\varphi_2} & |\lambda|^2 (\varphi_1\overline{\varphi_1} + \varphi_3\overline{\varphi_3})  &  |\lambda|^2 \varphi_3\overline{\varphi_2} + \mu \ \varphi_1\overline{\varphi_3} \\
|\lambda|^2 \varphi_1\overline{\varphi_3} - \overline{\mu } \ \varphi_3\overline{\varphi_2}   &  |\lambda|^2 \varphi_2\overline{\varphi_3} + \overline{\mu } \ \varphi_3\overline{\varphi_1}  & \pi_1^{\ast } \Xi   - |\lambda|^2 (\varphi_1\overline{\varphi_1} + \varphi_2\overline{\varphi_2}) \end{array} \right]  .
\end{equation}
From this, one gets the Chern forms of $M$, and thus the Chern classes. It is easy to see that
\begin{equation}
c_1(M)=2(1+\text{deg}(f) -g(M_1)) \ \pi_1^{\ast} \sigma , \label{c1
formula}
\end{equation}
where  $\sigma \in H^2(M_1, {\mathbb Z})\cong {\mathbb Z}$ is the
positive generator. In particular, $c_1(M)=0$ if and only if
$g(M_1)=1+\text{deg}(f)$, and such example with the lowest genus
would be $g(M_1)=3$ and $\text{deg}(f)=2$, namely, a hyperelliptic
curve of genus $3$. This includes Calabi's 3-folds constructed in
\cite{Calabi}. In Section \ref{Calabi 3-folds revisited} we will
give a detailed discussion on the connection between Calabi's
3-folds and BSV type warped complex structures.

\vsv

Since $|T^c|^2=8\sum_{i,j,k} |T^k_{ij}|^2$, in the case of $M^3_f$, it is equal to $16|\lambda|^2$, and we have
$$ |\lambda |^2\varphi_3\overline{\varphi_3} = \beta \overline{\beta } = \frac{dfd\overline{f}}{(1+|f|^2)^2}. $$
On ${\mathbb P}^1={\mathbb C}\cup \{ \infty \}$, we have
$$ \int_{{\mathbb P}^1} \frac{idzd\overline{z}} {(1+|z|^2)^2} = 2\pi .$$
So the  $L^2$-norm of the Chern torsion of $M^3_f$, or its {\em total Chern torsion,}  is given by
\begin{equation}
\int_M |T^c|^2dv = 32\pi v_2 \text{deg}(f), \label{total torsion
norm}
\end{equation}
where $v_2$ is the volume of $M_2$. In other words, the total Chern torsion of $M^3_f$ can be arbitrarily large, when $\text{deg}(f)$ gets bigger and bigger.

\vsv

Next, let us show that $M^3_f$ is not K\"ahelrian, namely, it cannot admit any K\"ahler metric. To see this, let us compute
$$ d(\varphi_1\overline{\varphi_1})=(-\alpha \varphi_1 + \beta \overline{\varphi_2})\overline{\varphi_1} - \varphi_1 (-\overline{\alpha }\overline{\varphi_1} + \overline{\beta } \varphi_2) = -\beta \overline{\varphi_1} \overline{\varphi_2} + \overline{\beta } \varphi_1 \varphi_2,$$
here we used the fact that $\overline{\alpha } = - \alpha$, thus we get
$$
 \partial \overline{\partial } (\varphi_1\overline{\varphi_1}) =
 d \overline{\partial } (\varphi_1\overline{\varphi_1}) =
 d (-\beta \overline{\varphi_1} \overline{\varphi_2} ) =
 \beta \overline{\beta} (\varphi_1 \overline{\varphi_1} +
 \varphi_2 \overline{\varphi_2}).
 $$
Similarly,
$$  \partial \overline{\partial } (\varphi_2\overline{\varphi_2})
= \beta \overline{\beta} (\varphi_1 \overline{\varphi_1} +
\varphi_2 \overline{\varphi_2}) ,$$
Therefore, we get the following
\begin{equation}
\partial \overline{\partial } \omega_g = 2 \sqrt{-1} \beta
\overline{\beta} (\varphi_1 \overline{\varphi_1} + \varphi_2
\overline{\varphi_2}) = 2 \beta \overline{\beta}  \ \omega_g.
\end{equation}
Now, if $\omega_h$ is a Hermitian metric on $M^3_f$. Write $\omega_h
= \sqrt{-1} \sum h_{i\overline{j}} \varphi_i \overline{\varphi_j}$.
The matrix $(h_{i\overline{j}})$ is positive definite. We have
\[
\sqrt{-1} \partial \overline{\partial } \omega_g \wedge \omega_h =
\frac{1}{3} |\lambda|^2 (h_{1\overline{1}}+ h_{2\overline{2}}) \
\omega_g^3. \]
Clearly, the integral of the right hand side over
$M^3_f$ is positive, therefore we conclude that $\omega_h$ cannot
satisfy the condition $\partial \overline{\partial } \omega_h =0$
everywhere. That is, we have

\begin{lemma}
Let $(M_1, J, g_1)$ be any compact Riemann surface and $f$
any non-constant holomorphic map from $M_1$ to $\mathbb{P}^1$. Then the
warped complex tori $M^3_f=M_1 \times T^4_{\mathbb{R}}$ does not
admit any Hermitian metric that is pluri-closed. In particular, any
such $M^3_f$ is non-K\"ahlerian.
\end{lemma}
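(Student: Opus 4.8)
The plan is to exhibit a single universal obstruction against the pluri-closed condition, coming from the fundamental form $\omega_g$ of the flat metric itself, and to pair it against an arbitrary candidate Hermitian metric. First I would record, using the explicit unitary coframe $\varphi$ constructed above (with $\varphi_3=\pi_1^{\ast}\psi_j$ pulled back from $M_1$), that $\omega_g=\sqrt{-1}\,(\varphi_1\overline{\varphi_1}+\varphi_2\overline{\varphi_2}+\varphi_3\overline{\varphi_3})$ and that, from the structure equations attached to the connection forms in $(\ref{two connection 1-forms})$ (namely $d\varphi_1=-\alpha\varphi_1+\beta\overline{\varphi_2}$, $d\varphi_2=-\alpha\varphi_2-\beta\overline{\varphi_1}$, $d\varphi_3=-\pi_1^{\ast}\xi\,\varphi_3$ and $d\alpha=\beta\overline{\beta}$), one has the key identity $\partial\overline{\partial}\,\omega_g=2\beta\overline{\beta}\,\omega_g$, where $\beta\overline{\beta}=|\lambda|^2\varphi_3\overline{\varphi_3}=df\,d\overline{f}/(1+|f|^2)^2$ is a nonnegative $(1,1)$-form. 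The decisive observation is that $\beta\overline{\beta}$ is not identically zero: since $f$ is a non-constant holomorphic map from a compact Riemann surface, its critical points are isolated, so $\lambda=T^3_{12}\ne 0$ on a dense open subset of $M^3_f$.

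Next I would pair this identity with an arbitrary Hermitian metric $\omega_h=\sqrt{-1}\sum h_{i\overline{j}}\varphi_i\overline{\varphi_j}$, where $(h_{i\overline{j}})$ is positive definite. Expanding $\sqrt{-1}\,\partial\overline{\partial}\omega_g\wedge\omega_h$ and using $\varphi_3\overline{\varphi_3}\wedge\varphi_3\overline{\varphi_3}=0$, one obtains the pointwise identity $\sqrt{-1}\,\partial\overline{\partial}\omega_g\wedge\omega_h=\frac{1}{3}|\lambda|^2(h_{1\overline{1}}+h_{2\overline{2}})\,\omega_g^3$, whose coefficient is nonnegative everywhere and strictly positive away from the finite critical set of $f$. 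Hence $\int_{M^3_f}\sqrt{-1}\,\partial\overline{\partial}\omega_g\wedge\omega_h>0$.

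Finally I would turn the compactness of $M^3_f$ into a contradiction by integrating by parts. Applying Stokes' theorem twice on the closed threefold $M^3_f$ — using $\partial\partial=\overline{\partial}\,\overline{\partial}=0$ together with the vanishing of wedge products of forbidden bidegree on a complex threefold to discard the intermediate terms — one gets $\int_{M^3_f}\partial\overline{\partial}\omega_g\wedge\omega_h=\int_{M^3_f}\omega_g\wedge\partial\overline{\partial}\omega_h$. If $\omega_h$ were pluri-closed, i.e. $\partial\overline{\partial}\omega_h=0$, the right-hand side would vanish, contradicting the strict positivity just obtained. Therefore $M^3_f$ carries no pluri-closed Hermitian metric; since every K\"ahler metric satisfies $\partial\overline{\partial}\omega_h=0$, $M^3_f$ is non-K\"ahlerian.

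The only genuinely substantive input is the identity $\partial\overline{\partial}\omega_g=2\beta\overline{\beta}\,\omega_g$, which is already available from the explicit connection forms above, so I do not expect a real obstacle. The points that still require a little care are the bidegree bookkeeping in the two integrations by parts (to be certain the exact and forbidden-type terms truly drop out) and the verification that $\beta\overline{\beta}\not\equiv 0$, which is exactly the place where the hypothesis that $f$ be non-constant enters.
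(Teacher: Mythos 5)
Your proposal is correct and follows essentially the same route as the paper: the identity $\partial\overline{\partial}\omega_g=2\beta\overline{\beta}\,\omega_g$, the pairing $\sqrt{-1}\,\partial\overline{\partial}\omega_g\wedge\omega_h=\frac{1}{3}|\lambda|^2(h_{1\overline{1}}+h_{2\overline{2}})\,\omega_g^3$, and the positivity of its integral against an arbitrary Hermitian $\omega_h$ are exactly the steps used there. The only difference is that you spell out the double integration by parts and the non-vanishing of $\beta\overline{\beta}$ (isolated critical points of the non-constant $f$), which the paper leaves implicit.
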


The notion of \emph{G-K\"ahler-like} was introduced in \cite{YZ} and
it is equivalent to $\Theta_{2}=0$ (Lemma 5 in \cite{YZ}). One
result in \cite{YZ} implies any compact G-K\"ahler-like Hermitian
manifold must be balanced. Now it follows from (\ref{two connection
1-forms}) that the product metric $g_1 \times g_2$ on $M^3_f$ has
$\Theta_{2}=0$, hence is G-K\"ahler-like and balanced. We remark
that this observation is also implied by a more general result in
\cite{BSV}. (See Proposition 5.3(ii) on P.144 in \cite{BSV})

\vsv

Next let us show that $\text{kod}(M^3_f)=-\infty$ when the base $M_1$
is an elliptic curve, i.e. for any $m \geq 1$, any $s \in
H^{0}(M^3_f, m K_M)$ must be identically $0$.

\vsv

If not, let $D$ be the zero locus of $s$, then $D$ is an effective
divisor in $M$. Since $(M^3_f,g_1 \times g_2)$ is balanced, the
integral of $\omega^2$ along $D$ is well-defined, which will be the
volume of $D$, thus positive. If $s$ is nowhere zero, then $D$ is
the zero divisor and this integral is zero.

On the other hand, we have:
\[
\int_{D} \omega^2= \int_{M^3_f} -m
\frac{\sqrt{-1}}{2\pi}\operatorname{Tr}(\Theta) \wedge
\omega^2=\int_{M^3_f} \frac{-2m}{\pi} |\lambda|^2
\frac{\sqrt{-1}}{2} \varphi_3 \overline{\varphi_3} \wedge
\omega^2=\frac{-m}{3\pi} \int_{M^3_f} |\lambda|^2 \omega^3,
\] so the integral is always negative, a contradiction.

\vsv

Note that for compact Riemannian surfaces $M_1$ of genus $g_1$, the last integral in the above equals to $-2m (1-d-g_1)v_2$, where $d$ is the degree of $f: M_1 \rightarrow {\mathbb P}^1$ and $v_2$ the volume of the $4$-torus. So the Kodaira dimension of $M_f^3$ will be $-\infty $ if $1+d-g_1>0$. Note that $d$ is always a positive integer, and it can be $1$ only when $g_1=0$, so for $g_1\leq 2$ one always has $1+d-g_1>0$.

\vsv

The above discussion is summarized in the following lemma.
\begin{lemma}
The warped complex tori $M^3_f=M_1 \times
T^{4}_{\mathbb{R}}$ have Kodaira dimension $-\infty$ when the genus of $M_1$ is $2$ or less. In particular, this is the case for all $BSV$ $3$-tori.
\end{lemma}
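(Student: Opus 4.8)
The plan is to establish the (formally stronger) statement that $H^0(M^3_f, mK_M)=0$ for every $m\geq 1$, which is exactly what $\operatorname{kod}(M^3_f)=-\infty$ means; the BSV $3$-tori are then covered by taking $g(M_1)=1$. So I would argue by contradiction: suppose some $m\geq 1$ admits a nonzero section $s\in H^0(M^3_f,mK_M)$, with zero divisor $D$, an effective (possibly empty) divisor on $M^3_f$. The idea is to compute the integral $\int_D\omega_g^2$ of the fundamental $(2,2)$-form of the product metric $g=g_1\times g_2$ in two ways and reach a numerical contradiction. The single structural input, already recorded in the discussion above, is that $g$ is \emph{balanced}: \eqref{two connection 1-forms} shows $\Theta_2=0$, so $g$ is G-K\"ahler-like, hence balanced by the cited result of \cite{YZ} (equivalently by the equality case of \cite{Gauduchon2}); in particular $d\omega_g^2=0$.

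First I would record the trivial inequality: since $\omega_g$ is the fundamental form of a Hermitian metric and $D$ is effective, $\tfrac12\int_D\omega_g^2$ is the Riemannian $4$-volume of $D$ counted with multiplicities, so $\int_D\omega_g^2\geq 0$, with equality precisely when $D=\emptyset$, i.e.\ $s$ is nowhere zero. Then I would compute the same number cohomologically. By the Poincar\'e--Lelong formula applied to $s$, viewed as a holomorphic section of $mK_M$ with the fibre metric induced by $g$, the integration current of $D$ is cohomologous to $c_1(mK_M)=-m\,c_1(M)$, a de Rham representative of $c_1(M)$ being $\tfrac{\sqrt{-1}}{2\pi}\operatorname{Tr}(\Theta)$ with $\Theta$ the Chern curvature computed above. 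Wedging with $\omega_g^2$ and integrating, the exact contribution disappears (two integrations by parts, using $d\omega_g^2=0$, hence $\partial\overline\partial\omega_g^2=0$), so by the Chern class formula \eqref{c1 formula},
\[
\int_D\omega_g^2 \;=\; -\,m\int_M c_1(M)\wedge\omega_g^2 \;=\; -\,2m\bigl(1+\deg f-g(M_1)\bigr)\int_M \pi_1^{\ast}\sigma\wedge\omega_g^2,
\]
where $\sigma$ is the positive generator of $H^2(M_1,\mathbb Z)$. Since $\omega_g^2$ restricts to a positive volume form on each fibre $\{y_1\}\times M_2$, the integral $\int_M\pi_1^{\ast}\sigma\wedge\omega_g^2$ is strictly positive. (Equivalently, the constant $2(1+\deg f-g(M_1))$ can be read off directly from $\operatorname{Tr}(\Theta)=\pi_1^{\ast}\Xi+2|\lambda|^2\varphi_3\overline{\varphi_3}$, evaluating the $\pi_1^{\ast}\Xi$ part by Gauss--Bonnet on $M_1$ and the $|\lambda|^2$ part via $\int_{\mathbb P^1}\frac{\sqrt{-1}\,dz\,d\overline{z}}{(1+|z|^2)^2}=2\pi$, just as in the elliptic-base case treated above.)

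It then remains to check that $1+\deg f-g(M_1)>0$ whenever $g(M_1)\leq 2$. As $f$ is non-constant, $\deg f\geq 1$; and $\deg f=1$ would make $f$ a biholomorphism onto $\mathbb P^1$, which is impossible unless $g(M_1)=0$ (in particular a genus-$2$ curve, being hyperelliptic, admits a degree-$2$ map to $\mathbb P^1$ but no degree-$1$ one). Hence $1+\deg f-g(M_1)$ is $2$ for $g(M_1)\in\{0,1\}$ and at least $1$ for $g(M_1)=2$, so it is always positive; the second computation then gives $\int_D\omega_g^2<0$, contradicting the first. Therefore no such $s$ exists and $\operatorname{kod}(M^3_f)=-\infty$, which for $g(M_1)=1$ is the assertion for all BSV $3$-tori. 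The step I expect to be the real content is the Poincar\'e--Lelong identity $\int_D\omega_g^2=-m\int_M c_1(M)\wedge\omega_g^2$: this is the only place where the balanced hypothesis is genuinely used, for on an arbitrary non-K\"ahler threefold $\int_D\omega_g^2$ is metric-dependent and need not be computable from curvature at all — it is precisely the closedness $d\omega_g^2=0$ that converts it into a topological intersection number via Stokes together with the Lelong current of a pluricanonical section. Everything else — extracting the numerical constant from \eqref{c1 formula} (or from the explicit $\Theta$) and the Riemann--Hurwitz-type observation $\deg f\geq 2$ for $g(M_1)\geq 1$ — is routine.
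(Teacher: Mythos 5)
Your argument is correct and is essentially the paper's own proof: both proceed by contradiction from a nonzero pluricanonical section, use the balanced condition $d\omega_g^2=0$ to make $\int_D\omega_g^2$ a cohomological pairing equal to $-m\int_M c_1(M)\wedge\omega_g^2$, and then invoke the formula $c_1(M)=2(1+\deg f-g(M_1))\pi_1^{\ast}\sigma$ together with the observation that $\deg f=1$ forces $g(M_1)=0$, so the pairing is negative for $g(M_1)\leq 2$ while the volume interpretation makes it nonnegative. The only difference is presentational: you phrase the curvature computation via Poincar\'e--Lelong and the Chern class formula, whereas the paper integrates $\operatorname{Tr}(\Theta)\wedge\omega^2$ directly, which amounts to the same calculation.
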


This completes the proof of Proposition \ref{torsionparallel}.

\vs

\section{Calabi 3-folds revisited} \label{Calabi 3-folds revisited}

In 1958 Calabi \cite{Calabi} discovered that $M_1 \times
T_{\mathbb{R}}^4$ where $M_1$ is a hyperelliptic Riemann surface
with odd genus $g \geq 3$ and $T^4$ a real $4$-torus, can be given a
complex structure $J$ such that the resulting threefold $(M_3, J)$
admits no K\"{a}hler metric and has vanishing fist Chern class.
In this section, we explore the connection between Calabi's
construction and the BSV type warped complex structures.

\vsv

\subsection{A review of Calabi's 3-folds} \label{review Calabi}

Without specification, all results in this subsection is from Calabi
\cite{Calabi}. Let $M_1$ be a hyperelliptic Riemann surface with
odd genus $g \geq 3$, then $M_1$ admits a meromorphic function of
degree 2, branched over $2g+2$ distinct points on $M_1$. Denote these points by
$P_i$, and assume that $z(P_i) \neq \infty$ for each $i$. Then we get a
single-valued meromorphic function on $M_1$:
$$w=\sqrt{\prod_{i=1}^{2g+2} (z-z(P_i))}.$$

It is well-known that $H^{1,0}(M_1)=\operatorname{Span}
\{\frac{z^j}{w}dz \mid 0\leq j \leq g-1\}$. Let $\phi(z)$
be an arbitrary polynomial in $z$ of degree $\frac{g-1}{2}$. If we
view $\phi$ as a meromorphic function on $M_1$, it is of degree
$g-1$.

\vsv

Now pick the following three linearly independent forms from
$H^{1,0}(M_1)$
\[
\omega_1=\frac{\phi^2(z)-1}{2w} dz, \omega_2=\frac{\phi(z)}{w} dz,
\omega_1=\frac{\phi^2(z)+1}{2\sqrt{-1}w} dz.
\]
This is exactly the Weierstrass representation of minimal surfaces,
since $\omega_1, \omega_2, \omega_3$ do not vanish simultaneously on
$M_1$. This implies that the map $(x_1, x_2, x_3)$ where
\[
x_i= \operatorname{Re} \int_{Q_0}^{Q} \omega_i,
\]
locally maps $M_1$ to a minimal surface in $\mathbb{R}^3$. Here
$Q_0$ is a fixed point on $M_1$. In general, those $x_i$ are not
well-defined on $M_1$ globally, and they depend on $\pi_{1}(M_1, Q_0)$. But after lifting the map to the maximal Abelian covering $\widetilde{M_1}$ of $M_1$, one
gets a minimal immersion $F_1: \widetilde{M_1} \rightarrow
\mathbb{R}^3$. The image $F_1(\widetilde{M_1})$ might be complicated
(e.g., everywhere dense), but the covering transformations in
$\widetilde{M_1}$ are ${\mathbb Z}^{g}$ generated by translations in
$\mathbb{R}^3$.

\vsv

Define $F \doteq F_1 \times Id: \widetilde{M_1} \times \mathbb{R}^4
\rightarrow \mathbb{R}^7$, then $F$ defines an immersed hypersurface
in $\mathbb{R}^7$. Note that the space of purely Cayley numbers can
be identified as $\mathbb{R}^7$, therefore any immersed hypersurface
in $\mathbb{R}^7$ can be made an almost complex manifold by defining
\[
dF(J u)=N \times dF(u)
\] for any $u \in T (\widetilde{M_1} \times
\mathbb{R}^4)$. Here $\times$ stands for the cross product defined on the space of
purely Cayley numbers.

\vsv

Calabi \cite{Calabi} proved that the almost complex structure on the image of $F_1 \times Id$ in
$\mathbb{R}^7$ is integrable when $F_1$ is a minimal immersion.
Moreover, such a complex structure is invariant under translations
in $\mathbb{R}^7$, thus descends down to the compact quotient $M_1 \times
T^{4}_{\mathbb{R}}$. This is the Calabi's 3-fold $M^3$. It is proved
in \cite{Calabi} that $M^3$ admits no K\"ahler metric and has
$c_1(M^3)=0$.

\vsv

Calabi also proved that the induced metric $ds^2=dF \cdot dF$
from $F(\widetilde{M_1} \times \mathbb{R}^4) \in \mathbb{R}^7$ is
compatible with the complex structure $J$ defined above. Therefore the corresponding metric
$g$ on $(M^3, J)$ is a Hermitian metric.

\vsv

Gray \cite {Gray} studied the curvature properties of Calabi's 3-fold
$(M^3, J, g)$. It is shown in \cite {Gray} that Calabi's metric is
G-K\"ahler-like (see \cite{YZ} for a definition.)

\vsv

\subsection{Calabi 3-folds in terms of BSV-warped complex structures}

Let $(M^3, J)$ be the Calabi's 3-fold in Subsection \ref{review
Calabi}, we now explain that Calabi's construction is exactly a
special case of BSV-warped complex structure on $M_1 \times
T^4_{\mathbb{R}}$.

\vsv

By a direct calculation, one sees that the image $F_1(\widetilde{M_1})$ in
$\mathbb{R}^3$ defined above has the unit normal vector
\[
N=(N_1, N_2, N_3)=(\frac{2 \operatorname{Re}\phi}{|\phi|^2+1},
\frac{ |\phi|^2-1}{|\phi|^2+1}, \frac{2
\operatorname{Im}\phi}{|\phi|^2+1}).
\]
Therefore, the corresponding image $F(\widetilde{M_1} \times
\mathbb{R}^4)$ in $\mathbb{R}^7$ has the unit normal vector
$$ N = ( N_1, N_2, N_3, 0, 0, 0, 0).$$

Using the table of the cross product defined on purely Cayley
numbers, it is straightforward to write down the action of $J$
restricted on $T_p (\widetilde{M_1})$ and on $T_p
({\mathbb R}^4)=\operatorname{Span}\{\frac{\partial}{\partial x_4},
\frac{\partial}{\partial x_5}, \frac{\partial}{\partial x_6},
\frac{\partial}{\partial x_7}\}$. For example, the first one is
independent of $T_p ({\mathbb R}^4)$, while the latter takes the following
form under the basis $\{\frac{\partial}{\partial x_4},
\frac{\partial}{\partial x_5}, \frac{\partial}{\partial x_6},
\frac{\partial}{\partial x_7}\}$:
\[
\left[ \begin{array} {cccc}  0 & N_1 & N_2 & N_3 \\
-N_1 & 0 & -N_3 & N_2\\
-N_2 & N_3 & 0 & -N_1\\
-N_3 & -N_2 & N_1 & 0
\end{array} \right]
\]
Comparing with Formula (\ref{J representation}), we see that
Calabi's complex structure can be written as $J=J_1+J(\phi(x))$
where $J_1$ is the complex structure on $M_1$ and $J(\phi(x))$ is
the complex structure on $T^{4}_{{\mathbb R}}$ defined by the
holomorphic map $\phi: M_1 \rightarrow {\mathbb P}^1$. Note that
$\phi$ is of degree $g-1$, by (\ref{c1 formula}) we also see that
$c_1 (M^3)=0$.

\vsv

Another interesting formula from the Weierstrass representation is
that the total curvature $\int_{F_1(M_1)} |K|^2 dA$ equals to $4\pi$
multiple the degree of the Gauss map determined by $N$ (i.e. degree
of $\phi$). This resembles (\ref{total torsion norm}) on the total
Chern torsion.

\vsv

\subsection{Remarks on the induced Hermitian metric on the Calabi 3-folds}

As mentioned in Subsection \ref{review Calabi}, Calabi defined the
Hermitian metric $g$ on $M_1 \times T^4$ as the one on
$F(\widetilde{M_1} \times {\mathbb R}^4)$ induced from the standard Euclidean
metric on ${\mathbb R}^7$. Note that
\[
F(\tilde{Q}, x_4, x_5, x_6, x_7)=(x_1, x_2, x_3, x_4, x_5, x_6, x_7)
\] where $\tilde{Q} \in \widetilde{M_1}$. Therefore, the induced
metric $g$ is of the form (here $u$ is a local
holomorphic coordinate on $\widetilde{M_1}$)
\[
g=\sum_{i=1}^{3}|\frac{\partial x_i}{\partial u}|^2+\sum_{i=4}^7
|dx_i|^2.
\]
Apply the formula for the Weierstrass representation, we have
\[
g=\frac{(|\phi|^2+1)^2}{8|w|^2} |dz|^2+\sum_{i=4}^7 |dx_i|^2
\]
This is the metric that Gray \cite{Gray} proved to be
G-K\"ahler-like. It is a product Riemannian metric, however, in
general the factor on the $M_1$ direction is not the standard
hyperbolic metric. Indeed its Gauss curvature has the formula
\[
K=-2[\frac{4|\phi^{\prime} w|}{(|\phi|^2+1)^2}]^2.
\]
It has constant Gauss curvature if and only if there exists a
constant $c>0$ such that $c|\phi^{\prime}w|=(|\phi|^2+1)^2$.
However, there exists no such $\phi$ which is a polynomial of degree
$\frac{g-1}{2}$ in terms of $z$. This can be seen by comparing the
growth near $z=\infty$ determined by the degree.

\vsv

Motivated by the above discussion, we would like to raise the following
question:

\begin{question}
Let $(M_1,g_1)$ be a compact Riemannian surface of genus $g(M_1)\geq
2$ equipped with the hyperbolic metric, and let
$(T^4_{\mathbb{R}},g_2)$ be a flat $4$-torus. What is the space of
all orthogonal complex structures on $M_1 \times T^4_{\mathbb R}$
with respect to $g_1 \times g_2$?
\end{question}

\vsv

In particular, one would like to know the subset with vanishing first Chern class.

\vs

\noindent\textbf{Acknowledgments.} We would like to thank Sagun
Chanillo, Bo Guan, Xiaojun Huang, Kefeng Liu, Hongwei Xu, and Xiaokui Yang
for their interests and encouragement. We would also like to thank
Qingsong Wang for many helpful comments on an earlier version of the
manuscript.

\vs

\end{document}